\newtheorem{theorem}{\bf Theorem}[section] 
\newtheorem{corol}[theorem]{\bf Corollary} 
\newtheorem{defi}[theorem]{\bf Definition} 
\newtheorem{lemma}[theorem]{\bf Lemma} 
\newtheorem{prop}[theorem]{\bf Proposition} 
\newcommand{\abs}[1]{\lvert#1\rvert} 
\newcommand{\bl}{{\noi$\bullet$\ }} 
\newcommand{\ctl}{\centerline} 
\newcommand{\noi}{{\noindent}} 
\DeclareMathOperator{\cl}{cl} 
\newcommand{\Equi}{\Longleftrightarrow} 
\newcommand{\la}{{\langle}} \newcommand{\ra}{{\rangle}} 
\newcommand{\lf}{{\lfloor}} \newcommand{\rf}{{\rfloor}} 
\newcommand{\nd}{{\text{ and }}} 
\DeclareMathOperator{\sgn}{sgn} 
\newcommand{\sm}{{\smallsetminus}} 
\newcommand{\stx}{\begin{smallmatrix}} \newcommand{\estx}{\end{smallmatrix}} 
\newcommand{\g}{{\gamma}} 
\newcommand{\Lb}{{\Lambda}}  
\newcommand{\A}{{\mathbb A}} 
\newcommand{\D}{{\mathbb D}} 
\newcommand{\E}{{\mathbb E}} 
\newcommand{\Z}{{\mathbb Z}}
\newcommand{\cB}{{\mathcal B}} 
\newcommand{\cC}{{\mathcal C}} 
\newcommand{\cI}{{\mathcal I}} 
\DeclareMathOperator{\End}{End} 
\begin{document} 

\title[Index Systems]{On the index system of well-rounded lattices} 

\author[J. Martinet]{Jacques Martinet\,(*)} 

\keywords{Euclidean lattices, well-rounded lattices, index\\ 
(*)\,Univ. Bordeaux, IMB {\&} CNRS, UMR 5251
%Last revised November 16th, 2011; corrected January 6th, 2011
} 

\subjclass[2000]{11H55} 

\address{% 
Universit\'e de Bordeaux, Institut de Math\'ematiques, 
\newline\indent 
351, cours de la Lib\'e\-ration, 33405 Talence cedex, France} 
\email{Jacques.Martinet@math.u-bordeaux1.fr} 

\begin{abstract} 
Let $\Lb$ be a lattice in an $n$-dimensional Euclidean space $E$ 
and let $\Lb'$ be a Minkowskian sublattice of $\Lb$, that is, 
a sublattice having a basis made 
of representatives for the Minkowski successive minima of~$\Lb$. 
We consider the set of possible quotients $\Lb/\Lb'$ 
which may exists in a given dimension or among not too large 
values of the index $[\Lb:\Lb']$, indeed $[\Lb:\Lb']\le 4$, 
or dimension $n\le 8$. 
\end{abstract} 

\maketitle 

%%Section 1 
\section{Introduction}\label{secintro} 

Extending a deformation argument used in \cite{M2} to prove 
Minkowski's theorem on successive minima (Theorem~2.6.8; 
1996 in the French edition), I proved in \cite{M1} that 
the sets of isomorphisms classes of quotients $\Lb/\Lb'$ 
for $\Lb'$ a Minkowskian sublattice of $\Lb$ 
are the same that those we obtain by restricting ourselves 
to a {\em well rounded} lattice~$\Lb$, that is a lattice, 
the minimal vectors of which span~$E$. For this reason, 
as in the title, we restrict ourselves to pairs $(\Lb,\Lb')$ 
of a well-rounded lattice~$\Lb$ and a sublattice~$\Lb'$ 
generated by minimal vectors of~$\Lb$. 

\medskip 

In this paper we consider thus the following problem: 
what is for a given dimension~$n$ the set of possible quotients 
$\Lb/\Lb'$ for a given $\Lb$ as above when $\Lb'$ runs through 
the set of all sublattices of $\Lb$ having a basis made 
with minimal vectors of~\hbox{$\Lb$\,?} 

Our results heavily rely 
on results obtained in \cite{M1} (which extends previous work by Watson, 
Ry\v skov and Zahareva) in dimensions up to~$8$ and in \cite{K-M-S} 
in dimension~$9$. One knows (\cite{M1}, theorem~1.7) 
that for $\Lb$, $\Lb'$ as above, the index $[\Lb:\Lb']$ is bounded 
from above by $\g_n^{n/2}$ ($\g_n$ is the {\em Hermite constant} 
for dimension~$n$), an inequality which in particular bounds 
the annihilator $d$ of $\Lb/\Lb'$. Then $\Lb$ is generated by a basis 
$\cB=(e_1,\dots,e_n)$ of $\Lb'$ together with a finite set 
of vectors $e=\dfrac{a_1e_1+\dots a_n e_n}d$, defining this way 
a $\Z/d\Z$-code, namely the code with codewords $(a_1,\dots,a_n)$. 
In the two papers mentioned above, all the codes which may occur 
in a dimension~$n\le 9$ are listed (and in particular all possible 
quotients $\Lb/\Lb'$). But the existence of two given structures 
does not imply that they can be realized by sublattices 
of a same lattice~$\Lb$: for instance cyclic and non-cyclic 
quotients of order~$4$ exist in dimension~$8$, 
but whatever the dimension, no lattice $\Lb$ may have both these 
quotients without having sublattices with quotients cyclic of 
order~$2$. 

\medskip 

The aim of this paper is to throw some light on the various
existing combinations, according to the definition below, 
in which $\Lb'$ runs through the set of lattices having a basis 
made of minimal vectors of a given lattice~$\Lb$: 

%%Definition 1.1 
\begin{defi} {\rm 
Let $\Lb$ be a well-rounded lattice.} 
\begin{enumerate} 
{\rm 
\item 
The {\em maximal index} of $\Lb$ is 
$\imath(\Lb)=\max_{\Lb'}\,[\Lb:\Lb']$. 
\item 
The {\em index system of $\Lb$}, 
denoted by $\cI(\Lb)$, is the set of isomorphism classes 
of quotients $\Lb/\Lb'$.
\item 
We denote by $\cI_n$ the union of index systems $\cI(\Lb)$ 
with $\dim\Lb=n$.} 
\end{enumerate}\end{defi} 

When there is no risk of confusion, we shall write for short 
$4$, \hbox{$4\cdot 2$}, \hbox{$4\cdot 2^2$} to denote quotients 
isomorphic to $\Z/4\Z$, $\Z/4\Z\times\Z/2\Z$, $\Z/4\Z\times(\Z/2\Z)^2$, 
\linebreak 
res\-pectively; and the notation (see Subsection~\ref{subsecn8i8}) 

\smallskip 
\ctl{$\cI(\Lb_{75})=\{1,2,3,4,2^2,5,6,4\cdot 2,2^3\}$} 

\smallskip\noi 
means that for the the lattice $\Lb_{75}$, all structures 
up to order~$8$ except cyclic groups of order $7$ or~$8$ 
may be realized by convenient sublattices having a basis 
of minimal vectors . 

\medskip 

In Section~2 we recall some known results, mainly extracted from 
\cite{M1} and \cite{K-M-S}. Section~3 is devoted to dimension~$6$, 
index $\imath\le 3$ and related questions, 
Section~4 to dimension~$7$ and index~$4$, 
and Section~5 to dimension~$8$. 
Most of the constructions of lattices having a given 
index structure have been done using the {\em PARI-GP} package.

%%Section 2 
\section{Minimal classes and codes}\label{secgen} 

%%Subsection 2.1 
\subsection{General results} \label{subsecgenres} 
As usual, $S(\Lb)$ denotes the set of minimal vectors of the lattice 
$\Lb$, and we set $s(\Lb)=\frac12\,\abs{S(\Lb)}$. 
Minimal classes are the equivalence classes of lattices 
for the relation 
$$L\sim L'\Equi \exists\,u\in\End(E)\mid u(L)=u(L') 
\nd u(S(L))=u(S(L'))\,,$$ 
equipped with the ordering defined by 
$$\cC\prec\cC'\Equi \exists \Lb\in\cC,\,\exists\Lb'\in\cC'\mid 
S(\Lb)\subset S(\Lb')\,.$$ 
Clearly the index system of a lattice solely depends on its minimal 
class, and if $\cC\prec\cC'$, then the index structure of $\cC$ 
is a subset of that of~$\cC'$. Also, to an $n$-dimensional 
class $\cC$ we canonically attach its extension $\cC_{ext}$ 
to dimension~$n+1$, that of the lattices $\Lb\perp\Z$ 
for $\Lb\in\cC$ scaled to minimum~$1$. 
Clearly $\cI(\cC_{ext})=\cI(\cC)$. However these trivial extensions 
will be useful to construct some ``exotic'' index systems; 
compare \cite{M2} Section~3, or \cite{M-S}, Section~7. 

\medskip 

Given $n$, $d$, and a code $C$ over $\Z/d\Z$, among all minimal classes 
$\cC$ on which $C$ may be realized, if any, there exists a smallest 
one for the relation $\prec$, obtained using an averaging argument 
(\cite{M1}, Section~8; see also \cite{M-S}, Section~3 for a more 
general setting). 
Denote by {\em $m\le n$ the cardinality of the support of $C$}. 
The case of a binary code is easy: the smallest class is 
that of the lattices constructed by adjoining to $\Lb'=\Z^n$ 
the vectors of the form $e=\frac{\sum a_i e_i}2$. 
In this case the $e_i$ are pairwise orthogonal. 

\smallskip 

We now describe a notation we shall use for cyclic quotients. 
There is then a single vector $e=\frac{\sum a_i e_i}d$ to consider. 
We may assume that the $a_i$ are zero for $m<i\le n$, 
and (negating some $e_i$ if need be) that we have 
$1\le a_1\le\frac d2$ otherwise. We then denote by $m_1$ the number 
of subscripts $j$ such that $a_j=i$ and by $S'_i$ the set of vectors 
$e_j$ for which $a_j=i$; we have $m_i\ge 0$ and $\sum_i m_i=m$. 
For a lifting of $C$ to a pair $(\Lb,\Lb')$ (if any), 
the scalar products may be chosen to have constant values $x_i$ 
on $S_i$ and $y_{i,j}$ on $S_i\times S_j$ 
(no $y_{i,j}$ if $m_i$ or $m_j=0$ and no $x_i$ if $m_i\le 1$). 
We may moreover assume that $x_i=y_{i,j}=0$ if $d$ is even 
and $i=\frac d2$. 

\smallskip 

The complete description of codes is given in 
Table~11.1 of \cite{M1} for $n\le 8$ and in various 
tables of \cite{K-M-S} for $n=9$, for instance in Table~2 
for cyclic quotients, together with invariants relative 
to $\Lb$ and $\Lb'$: the kissing numbers $s,s'$ 
and the {\em perfection rank $r$ of $\Lb$} 
(the set of similarity classes of lattices in the smallest class 
$\cC$ depends on $\frac{n(n+1)}2-r$ parameters). 
Recall that a lattice (or a minimal class) with $r=\frac{n(n+1)}2$ 
is called {\em perfect}. Thus a perfect minimal class is the set 
of similarity classes of a perfect lattice.

%%Subsection 2.2 
\subsection{Calculation of index systems} 
\label{subseccalcul} 
Once we know the index system $\cI$ of such a class $\cC$ , 
we are sure that an index system which does not contain $\cI$ 
cannot be realized using the corresponding code, 
which strongly limits the search of smaller index systems. 
As for the index systems of classes $\cC'\succ\cC$, 
they are contained in those of the perfect classes containing $\cC$. 

\smallskip 

To calculate $\cI(\Lb)$ for a given lattice $\Lb$, we can use 
the naive algorithm, consisting in extracting systems of
$n$~independent vectors of $S(\Lb)$ and listing the corresponding 
quotients $\Lb/\Lb'$. Its complexity is roughly $\binom s n$. 
This is too large in case of the root lattice $\E_8$ 
($\binom s n=\binom{120}8$), but works otherwise up to dimension~$8$, 
since by a theorem of Watson ({Wat1}), 
we have $s\le 75$ if we exclude $\E_8$. Dutour Sikiri\'c has given 
in \cite{K-M-S}, Appendix~B, a more efficient algorithm, 
which notably allowed him to deal with a lattice having $(s,n)=(99,9)$.

%%Subsection 2.3 
\subsection{Specific lattices} \label{subsecspec} 
We list below a few results. 

\smallskip 

\bl\underbar{Root lattices}. These are the integral lattices 
which are generated by norm~$2$ vectors. They are orthogonal sums 
of the irreducible root lattices $\A_n,\,n\ge 1$, $\D_n,\,n\ge 4$, 
and $\E_n,\,n=6,7,8$. 

%%Proposition 2.1 
\begin{prop} 
The index systems of irreducible root lattices are as 
\linebreak 
follows: 
%\newline 
$\cI(\A_n)=\{1\}$; 
%\newline 
$\cI(\D_n)=\{1,2,\dots,2^{\lf\frac{n-1}2\rf}\}$; 
%\newline 
$\cI(\E_6)=\{1,2,3\}$; 
\newline 
$\cI(\E_7)=\{1,2,3,4,2^2,2^3\}$;
%\newline 
$\cI(\E_8)=\{1,2,3,4,2^2,5,6,4\cdot 2,2^3,3^2,2^4\}$. 
\end{prop}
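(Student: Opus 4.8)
The plan is to treat each irreducible root lattice in turn, always using the description of $\Lb$ as generated by a basis $\cB=(e_1,\dots,e_n)$ of a Minkowskian sublattice $\Lb'$ together with glue vectors $e=\frac{\sum a_ie_i}{d}$; here the minimum is $2$, so the $e_i$ are norm-$2$ roots and the Gram matrix of $\cB$ is a root-lattice Gram matrix, i.e. $e_i\cdot e_j\in\{0,\pm 1\}$. For a sublattice $\Lb'$ spanned by minimal vectors, $[\Lb:\Lb']$ divides the index of any such sublattice, so I would first compute, for each family, the possible elementary divisors of the Gram matrices built from $n$ independent roots. For $\cI(\A_n)$: in $\A_n$ any $n$ independent roots already form a basis (one can see this from the standard model $\A_n=\{x\in\Z^{n+1}:\sum x_i=0\}$, where a set of $n$ independent roots $e_i-e_j$ is a spanning tree on $n+1$ vertices and hence a basis), so the only quotient is trivial and $\cI(\A_n)=\{1\}$. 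For $\cI(\D_n)$ I would use $\D_n=\{x\in\Z^n:\sum x_i\ \text{even}\}$; a Minkowskian sublattice is generated by $n$ roots $\pm e_i\pm e_j$, and the index $[\Lb:\Lb']$ is the absolute value of the determinant of the corresponding sign matrix. A short combinatorial argument (each such basis corresponds to an orientation of a graph on $n$ vertices whose cycle space controls the $2$-part of the determinant) shows the quotient is always elementary abelian $2$-group of rank at most $\lfloor\frac{n-1}{2}\rfloor$, and that every such rank is attained; this gives $\cI(\D_n)=\{1,2,\dots,2^{\lfloor(n-1)/2\rfloor}\}$.

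For the exceptional lattices the cleanest route is to invoke the machinery already set up in the excerpt: by Subsection~\ref{subseccalcul} the index system $\cI(\Lb)$ is computed by the naive algorithm, extracting all $\binom{s}{n}$ systems of $n$ independent minimal vectors and reading off the Smith normal form of the change-of-basis matrix against a fixed Minkowskian basis. For $\E_6$ ($s=36$) and $\E_7$ ($s=63$) this is a finite — indeed modest — computation, carried out with {\em PARI-GP} as announced in the introduction, and it returns exactly the lists $\{1,2,3\}$ and $\{1,2,3,4,2^2,2^3\}$. Alternatively, and more in the spirit of the paper, one uses the classification of codes: Table~11.1 of \cite{M1} lists every $\Z/d\Z$-code that can be realized in dimension $n\le 8$, together with the perfect minimal classes on which it lifts; since $\E_6,\E_7,\E_8$ are perfect, $\cI(\E_n)$ is read off as the set of codes whose smallest realizing class is $\prec$ the class of $\E_n$. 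One still has to check the containment relations $\prec$, but for root lattices this is transparent because a sublattice generated by roots of $\E_n$ is itself a root lattice, so one only needs to know which root sublattices of full rank occur and what their codes are.

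For $\E_8$ the naive algorithm is infeasible ($\binom{120}{8}$), so here I would definitely argue via codes and sublattices. The key observation is that a Minkowskian sublattice $\Lb'$ of $\E_8$ is a rank-$8$ root sublattice with the same minimum, hence (up to isometry) one of the finitely many full-rank root sublattices of $\E_8$ — these are classified by Borel–de Siebenthal / the extended Dynkin diagram, the relevant ones being $\E_8$, $\A_1\oplus\E_7$, $\A_2\oplus\E_6$, $\A_1\oplus\A_2\oplus\A_5$ (actually $\A_8$, $\D_8$, $\A_1^2\oplus\D_6$, etc.), and for each the index $[\E_8:\Lb']$ and the group structure $\E_8/\Lb'$ is the quotient of weight lattices and is classical. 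Running through the list of full-rank root sublattices of $\E_8$ and recording $\E_8/\Lb'$ yields precisely $\{1,2,3,4,2^2,5,6,4\cdot 2,2^3,3^2,2^4\}$; the cases $5$, $6$, $3^2$, $2^4$ come from the $\A$-type and $\D$-type sublattices (e.g. $\A_8\subset\E_8$ gives the cyclic quotient of order $3$, glued twice gives order $9$? — one must be careful: the relevant indices are read from the Gram determinants), and one checks no other structure of order $\le 8$ (notably $7$, $8$, $4^2$, $\dots$) can occur because no full-rank root sublattice of $\E_8$ has the required index or elementary-divisor pattern. The main obstacle is this last verification — ensuring the list of full-rank root sublattices of $\E_8$ is complete and that each contributes exactly the claimed quotient, with nothing missed and nothing spurious — but this is a finite check that is essentially contained in the code tables of \cite{M1} and can be confirmed directly with {\em PARI-GP}.
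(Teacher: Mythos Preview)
Your approach matches the paper's: both rest on the classification of full-rank root sublattices, with the paper simply citing Korkine--Zolotareff for $\A_n$, \cite{K-M-S} Appendix~A for $\D_n$, and \cite{M1} Section~6 for the $\E_n$, while you sketch what those references contain. The $\A_n$ spanning-tree argument and the $\D_n$ signed-graph argument are correct and are essentially what the cited sources do.

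For $\E_8$ your strategy---enumerate the full-rank root sublattices via iterated Borel--de~Siebenthal and read off each quotient $\E_8/\Lb'$---is again exactly the method of \cite{M1}, Section~6, but your write-up is visibly unfinished: the parenthetical ``glued twice gives order $9$?'' and the phrase ``one must be careful'' show you have not actually carried out the enumeration. The missing work is a finite bookkeeping exercise (for instance $3^2$ comes from $\A_2^{\,4}$, $5$ from $\A_4^{\,2}$, $2^4$ from $\A_1^{\,8}$, while excluding $7$, $8$, $4^2$ amounts to checking that no rank-$8$ root sublattice of $\E_8$ has the right determinant and Smith form). Nothing is wrong in principle, but as it stands your $\E_8$ case is a plan rather than a proof---which, to be fair, is equally true of the paper's one-line citation.
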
 

\begin{proof} 
The result for $\A_n$ is part of an old theorem 
of Korkine and Zolotareff; see \cite{M2}, Section~6.1. 
The other cases are dealt with using the classification 
of root systems; see \cite{K-M-S}, Appendix~A, for $\D_n$ 
and \cite{M1}, Section~6 for~$\E_n$. 
\end{proof} 

\bl\underbar{Perfect lattices}. 
The classification of perfect lattices is known 
up to dimension~$n=8$. Up to $n=7$, disregarding root 
lattices and a few lattices with maximal index $\imath\le 2$, 
we are left with index systems $\{1,2,3\}$ ($n=6,7$), 
and $\{1,2,3,4\}$ and $\{1,2,3,4,2^2\}$, 
this last one attained only on $P_7^{10}$ (in Conway and Sloane's 
notation $P_n^i$; see \cite{M2}, Section~6.5). 

In dimension~$8$, up to seven exceptions, the index systems share out 
among four types, namely 

\ctl{$\cI_4=\{1,2,3,4,2^2\},\ \cI_5=\cI_4\cup\{5\},\ 
\cI_6=\cI_4\cup\{6\},\,\nd\, \cI_{5,6}=\cI_4\cup\{5,6\}\,.$} 

\noi{\small 
[The other systems, all attained on lattices 
having a perfect hyperplane section with the same minimum, 
are $\{1,2,3\}$ (twice), $\{1,2,2^2\}$, 
and those of the the three irreducible root lattices 
and of Barnes's lattice $\A_8^2=\la\E_7,\A_8\ra$, 
for which $\cI=\cI(\E_7)$.]} 

\smallskip 

\bl\underbar{Maximal index systems}. The maximal index systems 
up to $n=9$ are classified in \cite{M1} and \cite{K-M-S}. 
They reduce to $\{1\}$ if $n=2,3$, attained on all lattices, 
and to $\{1,2\}$ if $n=5$, attained on a $9$-parameters family. 
For $n=$, $7$, $8$, they are attained uniquely on the perfect classes 
of $\D_4$, $\E_7$ and $\E_8$, respectively. 
For $n=6$ and $n=9$, there are two maximal systems. If $n=6$, 
these are $\{1,2,2^2\}$, attained on $\D_6$, and $\{1,2,3\}$, attained on 
on a $10$-parameters family. If $n=9$, they are attained 
on the perfect classes of the laminated lattice $\Lb_9$ and 
of a lattice denoted by $L_{81}$ in \cite{K-M-S}. 
\linebreak 
We have 

\ctl{$\cI_9= \{1,2,3,4,2^2,5,6,7,8,4\cdot 2,2^3,9,3^2,10,
12,6\cdot 2, 4^2,4\cdot 2^2,2^4\}\,,$} 

\smallskip\noi 
$\cI(\Lb_9)=\cI_9\sm\{4^2\}$, and $\cI(L_{81})$ 
consists of all structures of index up to $8$ 
and the three structures of order~$16$. 

\smallskip 

\bl\underbar{Watson's identity}. We consider the case when $\Lb/\Lb'$ 
is cyclic, writing $\Lb=\la\Lb',e\ra$ with 
$e=\dfrac{a_1 e_1+\dots+a_n e_n}d$. 
Denoting by $\sgn(x)$ the sign of the real number~$x$, we have 
the identity 

\smallskip 
\ctl{$\big((\sum_{i=1}^n\abs{a_i})-2d\big)N(e)=
\sum_{i=1}^n\abs{a_i}\big(N(e-\sgn(a_i)e_i)-N(e_i)\big)\,,$} 

\smallskip\noi 
which implies that when the $a_i$ are strictly positive and 
add to $2d$, all the vectors $e-e_i$ are minimal. 
In this case, $\cI(\Lb)$ contains $\{1,2,\dots,d\}$. 
%\linebreak 
%(It indeed suffices to assume that at least $\lf\frac d2\rf$ 
%vectors $e-e_i$ are minimal). 

%%Section 3 
\section{Maximal index 3, dimension 6, and bases versus generators}% 
\label{secind3} 

In this section, we prove the classification results for lattices 
of maximal index~$\imath\le 3$ or dimension~$n\le 6$. 
We then consider some structures with $\imath=4$ corresponding 
to lattices generated by their minimal vectors which do not have 
any basis of minimal vectors. 

\smallskip 

We shall give a common proof for the two theorems stated below. 
In the second one, we only list the new structures, 
those which do not exist in a lower dimension. 

%%Theorem 3.1 
\begin{theorem} \label{thimax3} 
Let $\Lb$ be a lattice of maximal index $\imath\le 3$. 
The possible structure and the lower dimension $n_{\min}$ 
in which they exist are as follows: 

\noi{\small 
$\{1,2\}$\,: $n_{\min}=4$\,; 
$\{2\}$\,: $n_{\min}=5$\,; 
$\{1,2,3\}$\,: $n_{\min}=6$\,; 
\hbox{$\{3\},\{1,3\}$\,: $n_{\min}=7$\,;} 
$\{2,3\}$\,: $n_{\min}=11$.} 
\end{theorem}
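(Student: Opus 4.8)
The plan has three parts: reduce the possible systems to a short list, pin down the dimension thresholds using the Hermite inequality together with Watson's identity, and settle the remaining (largely existential) statements by explicit lattices, the system $\{2,3\}$ being the delicate point.

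\smallskip
\emph{Reduction and support bounds.} Because $\imath(\Lb)\le3$, every quotient $\Lb/\Lb'$ has order at most $3$, hence is cyclic of order $1$, $2$ or $3$; so $\cI(\Lb)$ is one of the seven nonempty subsets of $\{1,2,3\}$, and since $\{1\}$ occurs in every dimension (e.g. on $\A_n$) only the six systems of the statement are at issue. By (\cite{M1}, Theorem~1.7), $[\Lb:\Lb']\le\g_n^{n/2}$; as $\g_3^{3/2}=\sqrt2<2$ and $\g_5^{5/2}=2\sqrt2<3$, a lattice of dimension $\le3$ has $\cI(\Lb)=\{1\}$ and one of dimension $\le5$ has all quotients of order $\le2$, so $2\in\cI(\Lb)$ forces $n\ge4$ and $3\in\cI(\Lb)$ forces $n\ge6$. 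Moreover, if a $\Z/q\Z$ quotient ($q=2,3$) is realized by a Minkowskian sublattice $\Lb'$ with minimal basis $e_1,\dots,e_n$ and $\Lb=\la\Lb',e\ra$, $e=\frac{\sum a_ie_i}{q}$, then applying the same inequality to $\Lb\cap V$ — which is well-rounded, $V$ being the span of the $e_i$ with $a_i\ne0$, and inside which the $e_i$ ($a_i\ne0$) generate a sublattice with minimal basis and index divisible by $q$ — shows that the support $\{i:a_i\ne0\}$ has size $\ge4$ when $q=2$ and $\ge6$ when $q=3$.

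\smallskip
\emph{Thresholds via Watson's identity.} If this support has size exactly $4$ (for $q=2$) or exactly $6$ (for $q=3$), then negating some $e_i$ normalises the code word to $(1,\dots,1)$ with $\sum a_i=2q$, so the smallest minimal class of that code — the one with the $e_i$ pairwise orthogonal and $e$ minimal — has $\{1,\dots,q\}\subseteq\cI$ by Watson's identity, and since $\cC(\Lb)$ lies above it, $\{1,\dots,q\}\subseteq\cI(\Lb)$. Consequently: in dimension $4$ every $\Z/2\Z$ quotient has support $4$, so $2\in\cI(\Lb)\Rightarrow 1\in\cI(\Lb)$, ruling out $\{2\}$ in dimension $4$; and in dimension $6$ every $\Z/3\Z$ quotient has support $6$, so $3\in\cI(\Lb)\Rightarrow\{1,2,3\}\subseteq\cI(\Lb)$, hence (with $\imath\le3$) $\cI(\Lb)=\{1,2,3\}$, ruling out $\{3\}$ and $\{1,3\}$ in dimension $6$. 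Combined with $\cI(\D_4)=\{1,2\}$ and $\cI(\E_6)=\{1,2,3\}$ from the Proposition, this gives $n_{\min}(\{1,2\})=4$, $n_{\min}(\{1,2,3\})=6$, $n_{\min}(\{2\})\ge5$ and $n_{\min}(\{3\})=n_{\min}(\{1,3\})\ge7$; the matching realizations in dimensions $5$ and $7$ are produced from the $\Z/2\Z$- and $\Z/3\Z$-code tables of \cite{M1} via the smallest-class construction of Subsection~\ref{subsecgenres}, their index systems being read off by the algorithm of Subsection~\ref{subseccalcul}.

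\smallskip
\emph{The system $\{2,3\}$ and the main obstacle.} It remains to establish $n_{\min}(\{2,3\})=11$. For the lower bound one must show that no well-rounded $\Lb$ with $\imath(\Lb)\le3$ and $7\le n\le10$ has $\cI(\Lb)=\{2,3\}$: since then $1\notin\cI(\Lb)$, the previous paragraph forces every $\Z/2\Z$ quotient to have support $\ge5$ and every $\Z/3\Z$ quotient to have support $\ge7$, so these quotients are carried by code words spanning subspaces $V_2,V_3$ with $\dim V_2\ge5$ and $\dim V_3\ge7$; when $V_2+V_3=E$ this already gives $n\ge\dim V_2+\dim V_3-\dim(V_2\cap V_3)\ge12-\dim(V_2\cap V_3)$, and the heart of the matter is to check that a two-dimensional overlap $\dim(V_2\cap V_3)\ge2$ lets one splice minimal vectors of the two configurations into a basis of minimal vectors of $\Lb$, contradicting $1\notin\cI(\Lb)$ — whence $\dim(V_2\cap V_3)\le1$ and $n\ge11$ (for $7\le n\le9$ this may instead be verified directly against the tables of \cite{M1} and \cite{K-M-S}). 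For the upper bound one exhibits an eleven-dimensional lattice obtained by gluing, along a single minimal vector, a five-dimensional lattice with index system $\{2\}$ to a seven-dimensional one with index system $\{3\}$, and checks that neither a basis of minimal vectors nor a $\Z/6\Z$ quotient appears. This overlap analysis and the verification of the eleven-dimensional example are the expected main difficulty; everything else is bookkeeping with the Hermite bound, Watson's identity, and the smallest-class construction.
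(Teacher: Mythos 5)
Your first two steps are essentially sound and run parallel to the paper: you use the Hermite bound $[\Lb:\Lb']\le\g_n^{n/2}$ applied to $\Lb\cap V$ to get the support bounds $m\ge 4$ (resp.\ $m\ge 6$) for a $\Z/2\Z$ (resp.\ $\Z/3\Z$) quotient, where the paper gets the same bounds directly from Watson's identity; you then use Watson's identity, as the paper does, to show that at the threshold ($m=4$, resp.\ $m=6$) the vectors $e-e_i$ are minimal and hence $\cI\supseteq\{1,2\}$ (resp.\ $\{1,2,3\}$). That correctly settles $n_{\min}$ for $\{1,2\}$ and $\{1,2,3\}$ and gives the lower bounds $5$ and $7$ for $\{2\}$ and for $\{3\},\{1,3\}$; your realizations there are only gestured at (``produced from the tables via the smallest-class construction''), whereas the paper writes down the parameters ($x_1=\tfrac15$ and $x_1=\tfrac1{21}$ for $\{3\}$ and $\{1,3\}$ in dimension $7$), but the method is the right one.

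The genuine gap is the system $\{2,3\}$, which you yourself flag as ``the heart of the matter'' and then do not prove. For the lower bound $n\ge 11$, your plan rests on the unproven claim that $\dim(V_2\cap V_3)\ge 2$ would let one ``splice'' minimal vectors of the two configurations into a basis of $\Lb$; since $V_2$ and $V_3$ are supports relative to two \emph{different} Minkowskian bases, there is no evident mechanism for such a splicing, and nothing in your argument supplies one. (Your fallback of ``checking against the tables'' for $7\le n\le 9$ does not work either: the tables of \cite{M1} and \cite{K-M-S} list which codes occur individually, not which combinations co-exist in one lattice --- that is precisely the problem this paper addresses --- and $n=10$ would remain open.) The paper instead cites \cite{M3}, Lemma~3.2, and sketches the real argument: a minimal vector $x$ in the nontrivial coset of the index-$3$ sublattice is forced, by the coefficient bound of Lemma~\ref{lemmaxind} together with $1\notin\cI$, to have every coefficient equal to $-2$ on the support (of size $\ge 7$) and coefficients $\pm 3$ on at least four further coordinates, whence $n\ge 7+4=11$. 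For the upper bound your ``gluing along a single minimal vector'' is misdescribed: in the Conway--Sloane lattice \cite{C-S} the $5$- and $7$-dimensional pieces meet in the line spanned by the glue vector $e=\frac{e_1+\cdots+e_7}{3}$, which is \emph{not} minimal (if it were, replacing $e_1$ by $e$ would produce a basis of minimal vectors and force $1\in\cI$); and you do not verify that any such glued lattice has index system exactly $\{2,3\}$. Both halves of the $\{2,3\}$ claim therefore remain unestablished in your write-up.
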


%%Theorem 3.2 
\begin{theorem} \label{thnmax6} 
Let $\Lb$ be a lattice of dimension $n\le 6$. 
Then the minimal structures which exist in this dimension, but not in
a lower one, are as follows: 

\noi 
$n\le 3$\,: $\{1\}$\,; $n=4$\,: $\{1,2\}$\,; $n=5$\,: $\{2\}$\,; 
$n=6$\,: $\{1,2,3\},\{1,2,2^2\}$. 
\end{theorem}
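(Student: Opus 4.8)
The plan is to prove Theorems~\ref{thimax3} and~\ref{thnmax6} together: first use the index bound $[\Lb:\Lb']\le\g_n^{n/2}$ of \cite{M1}, Theorem~1.7 to shrink the list of possible quotients, and then use the classification of codes and of (perfect) minimal classes recalled in Section~2 to decide which index systems actually occur, and in which least dimension. Passing from this dimension-by-dimension analysis to Theorem~\ref{thimax3} is then automatic: the hypothesis $\imath(\Lb)\le 3$ simply discards every system containing a quotient of order~$4$.

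First I would record the consequences of the index bound. From $\g_2<2$ and $\g_3^{3/2}<2$ we get $[\Lb:\Lb']=1$, hence $\cI(\Lb)=\{1\}$, for $n\le 3$; from $\g_4^{2}=2$ and $\g_5^{5/2}=2\sqrt2<3$ we get $[\Lb:\Lb']\le 2$ for $n=4,5$; and from $\g_6^{3}=8/\sqrt3<5$ we get that for $n=6$ only the quotients $1,2,3,4,2^2$ are possible. For $n\le 4$ I would add Minkowski's theorem on successive minima: a reduced basis attains them, and for a well-rounded lattice they are all equal to the minimum, so the reduced basis consists of minimal vectors; hence $1\in\cI(\Lb)$. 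Together with the bound this gives $\cI(\Lb)\in\{\{1\},\{1,2\}\}$ for $n=4$ and $\cI(\Lb)\in\{\{1\},\{2\},\{1,2\}\}$ for $n=5$. The existence side is then covered by $\Z^n$ ($\{1\}$), by $\D_4$ ($\{1,2\}$: it has a basis of minimal vectors and the index-$2$ sublattice $\la e_1\pm e_2,\,e_3\pm e_4\ra$), by the trivial extension $\Lb\perp\Z$ (which lifts $\{2\}$ to dimension~$6$), and---for the genuinely new system $\{2\}$ in dimension~$5$---by an explicit well-rounded lattice without a basis of minimal vectors, produced as in the tables of \cite{M1}.

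The core is $n=6$, where $\cI(\Lb)\subseteq\{1,2,3,4,2^2\}$; I would treat the three ``large'' quotients in turn. A quotient $\Z/3\Z$ needs support~$6$ (since $\g_m^{m/2}\ge 3$ forces $m\ge 6$), so up to equivalence its code is $(1,1,1,1,1,1)$; as $\sum a_i=6=2d$, Watson's identity makes every $e-e_i$ minimal, whence $\cI(\Lb)\supseteq\{1,2,3\}$; conversely the class of $\Lb$ lies, for $\prec$, below a perfect class, and in dimension~$6$ every perfect class whose index system contains~$3$ has index system $\{1,2,3\}$ (Section~2, with $\cI(\E_6)=\{1,2,3\}$); hence $\cI(\Lb)=\{1,2,3\}$ as soon as $3\in\cI(\Lb)$. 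A quotient $2^2$ likewise needs support~$6$; the smallest class carrying a binary $[6,2]$-code adjoins half-sums to $\Lb'=\Z^6$, so $1\in\cI(\Lb)$, and $2\in\cI(\Lb)$ as well by passing to an intermediate sublattice, while above such a class one only meets the perfect class of $\D_6$, with $\cI(\D_6)=\{1,2,2^2\}$; hence $\cI(\Lb)=\{1,2,2^2\}$ as soon as $2^2\in\cI(\Lb)$. Finally a cyclic quotient $\Z/4\Z$ would again have support~$6$, and Table~11.1 of \cite{M1} shows no such code occurs in dimension~$6$; alternatively, Watson's identity forces $\sum a_i\ge 8$, which, together with $a_i\in\{1,2\}$ and annihilator~$4$, leaves only finitely many weight vectors, none of which admits a positive-definite Gram matrix of minimum~$1$ with scalar products constant on the $S_i$. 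Since $\{1,2,3\}$ and $\{1,2,2^2\}$ cannot both sit inside one index system, $\cI(\Lb)$ in dimension~$6$ is one of $\{1\},\{2\},\{1,2\},\{1,2,3\},\{1,2,2^2\}$; the last two are realized by $\E_6$ and $\D_6$ and are new (order~$3$, respectively a quotient of order~$4$, cannot occur for $n\le 5$), the first three come from lower dimensions, and the claimed list of new structures follows.

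The step I expect to be the real obstacle is the dimension-$6$ exclusion of a cyclic quotient of order~$4$ and of every index system containing both~$3$ and~$2^2$: these rest on the fine classification of codes and of perfect classes in \cite{M1} rather than on the soft Hermite estimate, so the care needed is in invoking those data correctly---in particular in checking that the perfect class of $\D_6$ is the only one lying above a binary $[6,2]$-class. A secondary point is the $5$-dimensional witness for $\{2\}$, which needs an explicit (computer-assisted) lattice rather than a general argument.
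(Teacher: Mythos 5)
Your proof is correct, and its skeleton matches the paper's: Watson's identity supplies the forced structures $\{1,2\}$ and $\{1,2,3\}$ at $n=4$ and $n=6$, the classification data of \cite{M1} excludes everything else, and explicit lattices give existence. The differences are in the packaging. Where the paper reads the bounds $\imath\le 1,\,2,\,4$ (for $n\le 3$, $n=4,5$, $n=6$) directly off Table~11.1 of \cite{M1}, you rederive them from the Hermite constants, and you invoke the classical fact that a lattice of dimension $\le 4$ has a basis attaining the successive minima, which the paper does not need. For $n=6$ the paper simply quotes that $\imath=4$ forces $\Lb$ to lie in the perfect class of $\D_6$, hence a $2$-elementary quotient and $\cI(\Lb)=\{1,2,2^2\}$; you instead sandwich $\cI(\Lb)$ between the index system of the smallest class of the (unique, weight-system $4^3$) binary $[6,2]$ code, which contains $\{1,2,2^2\}$, and that of a perfect class above it, which by the Section~2 classification can only be $\cl(\D_6)$ --- and similarly for $3\in\cI$ via $\E_6$. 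This is a slightly longer but equally valid route resting on the same external data. Two small caveats: your ``alternative'' exclusion of a cyclic quotient of order~$4$ in dimension~$6$ is not self-contained, since the soft constraints (Watson plus Hermite) still allow $(m_1,m_2)=(4,2)$, so at this point you genuinely need the code classification of \cite{M1}, exactly as the paper does; and the dimension-$5$ witness for $\{2\}$ can be made explicit rather than cited, as in the paper, by taking $e_1,\dots,e_5$ pairwise orthogonal and adjoining $e=\frac{e_1+\dots+e_5}2$, so that $S(\Lb)=S(\Lb')$.
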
 

\begin{proof} 
By Watson's identity, $\imath=2$ (resp. $\imath=3$) is possible 
only if $n\ge 4$ (resp. $n\ge 6$), and if equality holds, 
the index structure must be $\{1,2\}$ (resp. $\{1,2,3\}$). 
That these lower bounds suffice can be seen 
in Table~11.1 of~\cite{M1}. This moreover shows that other systems 
of maximum index~$2$ (resp. $3$) need $n\ge 5$ (resp. $n\ge 7$). 

For any $n\ge 5$, taking $\Lb=\Lb'\cup(\frac{e_1+\dots+e_n}2+\Lb')$ 
with pairwise orthogonal vectors $e_i$, we obtain a lattice 
$\Lb$ with $S=S(\Lb')$ (and $s=n$). 

Let now $n=7$ and $\Lb=\Lb'\cup\pm(\frac{e_1+\dots+e_7}2)$ 
with equal scalar products $e_i\cdot e_j=x_1$. 
Then for $x_1=\frac 15$ (resp. $x_1=\frac 1{21}$), 
we obtain a lattice $\Lb$ with $S=S(\Lb')$ and $s=n$ 
(resp. $S=S(\Lb')\cup\{\pm e\}$ and $s=n+1$), 
and it is then evident that $\cI(\Lb)=\{3\}$ (resp. $\cI(\Lb)=\{1,3\}$). 

Finally it proved in \cite{M3}, Lemma~3.2, that $\cI=\{2,3\}$ 
needs $n\ge 11$, a lower bound which is optimal by a result 
of Conway and Sloane (\cite{C-S}). 

This completes the proof of Theorem \ref{thimax3}. 
\newline{\small 
(Nevertheless we shall sketch below a proof of the bound above 
and then adapt it to neighbour situations.)} 

\smallskip 

By \cite{M1}, Table~11.1, if $n\le 3$, $n=4$ or~$5$, $n=6$, 
we have $\imath=1$, $\imath=2$, $\imath=4$, respectively. 
As a consequence, the assertions of Theorem~\ref{thnmax6} result from 
the proof above of Theorem~\ref{thimax3}, 
except possibly for $n=6$ and $\imath>3$. 
But using again Table~11.1 of \cite{M1}, we see that this may occur 
only if $\imath=4$ and $\Lb\sim\D_6$, which implies that 
$\Lb/\Lb'$ is $2$-elementary. 
\end{proof} 

\noi{\em Proof of Theorem~\ref{thimax3} for $1\notin\cI$.} 
{\small 
We write as above 
$\Lb=\Lb'\cup(\pm e+\Lb')$ with $e=\frac{e_1+\dots+e_m}2$ 
for some $m\le n$. Since $\cI(\Lb)\supset_{\ne}\{1\}$, 
there is a minimal vector $x$ in $e+\Lb'$, say, 
$x=\frac{a_1 e_1+\dots+a_n e_n}3$. 
By Watson's identity for denominator~$3$, we have $m\ge 6$, 
and even $m\ge 7$ since otherwise $e-e_1$ would be minimal. 
Since $\imath(\Lb)=3$, we have $\abs{a_i}\le 3$, 
hence $a_i=1$ or~$-2$ if $i\le m$, and $a_i=0,\pm 3$ if $i>m$. 
Again because $1\notin\cI$, we must have $a_i=-2$ if $i\le m$, 
and choosing $n$ minimal, $a_i\ne 0$ if $i>m$. 
Watson's identity for denominator~$2$ now implies $1+n-m\ge 4$, 
and even $1+n-m\ge 5$ since otherwise 
$\frac{x+e_{m+1}+e_{m+2}+e_{m+3}}2$ would be minimal.} 
Hence $n\ge m+(n-m)\ge 7+4=11$. 
\newline{\small 
[Note that the averaging argument shows that three values suffice 
for the $e_i\cdot e_j$ ($x_1$ if $i<j\le m$, $x_2$ if $j>i>m$, 
$y_1$ if $i\le m<j$). 
The example of \cite{C-S} is constructed this way.]} \qed 

\smallskip 

The proof above applies directly to index systems $\{2,3,2^2\}$, 
and with slight modifications, to index systems containing $\{3,4\}$ 
but not $\{1,3,4\}$. However we may prove better results in some 
cases, but before analyzing more closely these lattices 
which are generated by their minimal vectors without having a basis 
of minimal vectors, we prove some lemmas. 

%%Lemma 3.3 
\begin{lemma} \label{lemmaxind} 
Assume $\Lb$ has maximal index $d$ and 
let $x=\frac{a_1 e_1+\dots+a_n e_n}d\in\Lb$. 
If $x$ is minimal, then $\abs{a_i}\le d$ for all~$i$. 
\end{lemma}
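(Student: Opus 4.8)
The plan is to argue by contradiction, essentially extracting the relevant half of Watson's identity as recorded in the excerpt. Suppose $x=\frac{a_1e_1+\dots+a_ne_n}{d}$ is a minimal vector of $\Lb$ but some coordinate, say $a_1$, satisfies $\abs{a_1}>d$. After negating $e_1$ if necessary, I may assume $a_1>d$, so $a_1\ge d+1$. The idea is that the vector $x-e_1=\frac{(a_1-d)e_1+a_2e_2+\dots+a_ne_n}{d}$ still lies in $\Lb$ (it differs from $x$ by the lattice vector $e_1\in\Lb'\subset\Lb$), and I want to show it is strictly shorter than $x$, contradicting minimality of $x$ — unless $x-e_1$ happens to lie in $\Lb'$, in which case $\Lb/\Lb'$ would be generated by a residue of order strictly less than $d$ times an integer, and one pushes the argument a little to see this still contradicts the maximality of $d$. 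Actually the cleanest route avoids that subtlety: since $\Lb$ has maximal index $d$, every minimal vector has norm equal to the minimum, and I only need to compare $N(x-e_1)$ with $N(x)=N(e_1)=1$ (the common minimum).

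The key computation is the expansion
\[
N(x-e_1)-N(x)=N(e_1)-2\,x\cdot e_1=1-2\,x\cdot e_1,
\]
so $x-e_1$ is at least as short as $x$ precisely when $x\cdot e_1\ge\frac12$. The point is then to bound $x\cdot e_1$ from below using $a_1\ge d+1$. Here I would invoke the inequality, standard for well-rounded lattices with $x$ and the $e_i$ all of norm $1$, that controls $x\cdot e_i$ in terms of the coordinate $a_i/d$; more concretely, writing $x=\sum_i\frac{a_i}{d}e_i$ and taking the inner product with $x$ itself gives $1=N(x)=\sum_i\frac{a_i}{d}(x\cdot e_i)$, while $\abs{x\cdot e_i}\le 1$ by Cauchy--Schwarz (both vectors have norm $1$). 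These constraints force the coordinate with $\abs{a_i}/d>1$ to have $x\cdot e_i$ large: if $a_1/d>1$ then the term $\frac{a_1}{d}(x\cdot e_1)$ must be compensated, and a short estimate shows $x\cdot e_1\ge\frac{a_1-d}{a_1}\cdot\frac{1}{?}$ — this is the step I expect to need care with, so rather than the self-inner-product trick I would instead use directly that $N(x-e_1)\ge 1$ and $N(x-2e_1)\ge 1$ etc., i.e. that all the shifted vectors $x-ke_1$ ($k\in\Z$) are lattice vectors of norm $\ge 1$, to pin down $x\cdot e_1$.

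Concretely, the clean finish is: the function $k\mapsto N(x-ke_1)=1-2k(x\cdot e_1)+k^2$ is a parabola in $k$ minimized at $k_0=x\cdot e_1$, and it takes value $\ge 1$ at every integer $k$ since $x-ke_1\in\Lb$ has norm $\ge N(\Lb)=1$; hence $\abs{x\cdot e_1}\le\frac12$, i.e. $\abs{2(x\cdot e_1)}\le 1$. On the other hand, expressing things in the basis, the coefficient of $e_1$ when we write $x$ over the Gram matrix relates $x\cdot e_1$ to the $a_i$; the residue $\bar a_1=a_1\bmod d$ may be taken in $(-d/2,d/2]$, and replacing $e_1$ by $x-\lf a_1/d\rceil e_1$ reduces the first coordinate to this residue without changing the quotient $\Lb/\Lb'$ or leaving $\Lb$. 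So if $\abs{a_1}>d$ we could have chosen a representative vector of the same minimal norm with first coordinate of absolute value $\le d/2<\abs{a_1}$, and iterating we reach a vector with all coordinates in absolute value $\le d$; since the original $x$ was assumed minimal and the reduction only produced vectors of norm $\le N(x)$, all of them are minimal, and the one with reduced coordinates contradicts nothing — rather, it shows the \emph{original} $x$ with $\abs{a_1}>d$ was never forced, completing the argument that every minimal $x$ already satisfies $\abs{a_i}\le d$. The main obstacle, and the place I would spend the most effort making rigorous, is exactly this translation between the integer coordinates $a_i/d$ and the geometric quantities $x\cdot e_i$, i.e. checking that the coordinate being large genuinely forces $x\cdot e_i>\frac12$ for \emph{some} shift, so that $x-e_i$ (or a suitable $x-ke_i$) is strictly shorter; everything else is the routine parabola estimate above.
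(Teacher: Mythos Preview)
Your approach has a genuine gap at precisely the point you yourself flag as ``the main obstacle'': the passage from the basis coordinate $a_i/d$ to the geometric quantity $x\cdot e_i$. These are unrelated unless the basis $(e_i)$ is orthogonal, which it is not in general. Your parabola argument correctly yields $\lvert x\cdot e_1\rvert\le\tfrac12$, but this says nothing whatsoever about $a_1$; and nothing in your sketch turns $\lvert a_1\rvert>d$ into $x\cdot e_1>\tfrac12$. The identity $1=\sum_i\frac{a_i}{d}(x\cdot e_i)$ is a single linear relation among many unknowns and cannot isolate one coordinate. Your closing reduction also contains an error of sign: the parabola gives $N(x-ke_1)\ge N(x)$ for every integer $k$, not $\le$, so the shifted vectors are \emph{longer}, generally not minimal, and there is nothing to iterate. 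Even if there were, producing some \emph{other} minimal vector with small coordinates would not bound the coordinates of the given~$x$.

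The paper's proof bypasses norms entirely and uses the hypothesis in the only natural way: ``maximal index $d$'' is a statement about indices of sublattices having a basis of minimal vectors, so one exhibits such a sublattice. Since $x$ is minimal, for each $i$ with $a_i\ne 0$ the family $\{x\}\cup\{e_j:j\ne i\}$ is an independent set of $n$ minimal vectors; call $\Lb_0$ the lattice it spans. By definition of the maximal index, $[\Lb:\Lb_0]\le d$. One then computes $[\Lb:\Lb_0]$ directly (after dividing out $\delta=\gcd(a_1,\dots,a_n)$ and setting $d'=d/\delta$, $a'_i=a_i/\delta$, one finds $[\Lb:\Lb_0]=\frac{d}{d'}\,\lvert a'_i\rvert$), and the inequality $[\Lb:\Lb_0]\le d$ gives $\lvert a'_i\rvert\le d'$, hence $\lvert a_i\rvert\le d$. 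The whole argument is a change-of-basis determinant, with no metric input beyond the fact that $x$ and the $e_j$ are minimal.
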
 

\begin{proof} 
Let $\delta=\gcd(a_1,\dots,a_n)$, 
and set $a'_i=\frac{a_i}\delta$, $d'=\frac d\delta$, 
$\Lb''=\la\Lb',x\ra$, and $\Lb_0=\la x,e_j,j\ne i\ra$. 
We have $[\Lb'':\Lb']=d'$, hence $[\Lb:\Lb'']=\frac{d}{d'}$. 

Now let $i\in\{1,\dots,n\}$. If $a_i=0$, there is nothing to prove. 
Otherwise we may write 
$e_i=\frac{\sum_{i\ne i}\,a_j e_j-d' x}{\abs{a'_i}}$, with a denominator 
coprime with the $\gcd$ of the coefficients of the numerator. 
Hence we have $[\Lb'':\Lb_0]=\abs{a'_i}$, whence 
$[\Lb:\Lb_0]=\frac{d}{d'}\,\abs{a_i'}\le d$, 
i.e., $\abs{a'_i}\le d'$, and finally $\abs{a_i}\le d$. 
\end{proof} 

%%Corollary 3.4 
\begin{corol} \label{cormaxind} 
Let $\Lb$ be a lattice of maximal index $d$, and suppose that we have 
$\Lb=\la\Lb',f_1,\dots,f_\ell\ra$ with 
$f_k=\frac{\sum_i\,a_i^{(k)}e_i}{d_k}$ 
and that $d=d_1\cdots d_\ell$. 
If $x=\frac{\sum_i\,b_i e_i}{d_k}\in f_k+\Lb'$ is minimal, 
then $\abs{b_i}\le d_k$. 
\end{corol}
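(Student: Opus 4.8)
The plan is to deduce the corollary from Lemma~\ref{lemmaxind} by working modulo the other generators. Write $\Lb''=\la\Lb',f_1,\dots,f_{k-1},f_{k+1},\dots,f_\ell\ra$, so that $\Lb=\la\Lb'',f_k\ra$ and, because $d=d_1\cdots d_\ell$ and $[\Lb:\Lb']=d$ while $[\la\Lb',f_j\ra:\Lb']$ divides $d_j$, the indices must multiply exactly: $[\Lb'':\Lb']=\prod_{j\ne k}d_j$ and $[\Lb:\Lb'']=d_k$. In particular $f_k$ has order exactly $d_k$ in $\Lb/\Lb''$, so $\Lb''$ plays, relative to $\Lb$ and the single generator $f_k$, exactly the role that $\Lb'$ played in the lemma.

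The one point to be careful about is that $\Lb''$ need not have a basis of \emph{minimal} vectors of $\Lb$, so one cannot literally reread the lemma with $\Lb'$ replaced by $\Lb''$; rather one reruns its proof. First I would fix a $\Z$-basis $(g_1,\dots,g_n)$ of $\Lb''$ and write $x=\frac{c_1 g_1+\dots+c_n g_n}{d_k}$ with denominator $d_k$ (this is possible since $d_k x\in\Lb''$ and $x$ has order $d_k$ modulo $\Lb''$). Then, exactly as in the lemma, for each $i$ with $c_i\ne 0$ one expresses $g_i$ as an integral combination of $x$ and the $g_j$, $j\ne i$, with denominator $\abs{c_i}/\gcd(c_1,\dots,c_n)$ after dividing out the content; comparing the index of $\la x,g_j\ (j\ne i)\ra$ in $\Lb''$ with the hypothesis $\imath(\Lb)=d$ gives $\abs{c_i}\le d_k$ in the reduced coordinates, hence $d_k\mid$-denominator control as before.

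It remains to translate the bound from the coordinates $(c_i)$ in the basis $(g_j)$ of $\Lb''$ to the given coordinates $(b_i)$ in the basis $(e_i)$ of $\Lb'$. Here I would use that $x-f_k\in\Lb''$ together with the explicit shape $f_k=\frac{\sum_i a_i^{(k)}e_i}{d_k}$: since $x=\frac{\sum_i b_i e_i}{d_k}$ and $x\equiv f_k\pmod{\Lb''}$, one has $\sum_i(b_i-a_i^{(k)})e_i\in d_k\Lb''$, and the constraint ``index $\le d$'' forces each $b_i$ to lie in a residue class mod $d_k$ represented by an integer of absolute value $\le d_k$ — which, choosing the $e_i$ (as is implicit in the setup) so that the relevant scalar products are normalised, pins $\abs{b_i}\le d_k$. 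The main obstacle is exactly this last bookkeeping: making rigorous that the $\gcd$-reduction and the change of basis between $\Lb'$ and $\Lb''$ do not lose a factor, i.e. that the multiplicativity $d=d_1\cdots d_\ell$ really does let the single-generator Lemma~\ref{lemmaxind} be applied ``one coordinate block at a time''. Everything else is a direct transcription of the proof of the lemma.
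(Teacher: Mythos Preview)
Your choice of $\Lb''=\la\Lb',f_j:j\ne k\ra$ is the source of all your difficulties, and the argument does not close. The whole point of Lemma~\ref{lemmaxind} is that the sublattice $\Lb_0=\la x,e_j,j\ne i\ra$ has a basis of \emph{minimal} vectors, so that $[\Lb:\Lb_0]\le\imath(\Lb)$ can be invoked. When you replace the $e_j$ by an arbitrary $\Z$-basis $(g_j)$ of your $\Lb''$, the lattice $\la x,g_j,j\ne i\ra$ has no reason to be generated by minimal vectors, and the inequality you ``compare'' against simply is not available. Your final paragraph, translating bounds on the $c_i$ back to the $b_i$, is not a proof: the claim that ``the constraint `index $\le d$' forces each $b_i$ to lie in a residue class mod $d_k$ represented by an integer of absolute value $\le d_k$'' is exactly the statement to be proved, and the appeal to normalised scalar products is irrelevant to an index computation.

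The paper's argument goes the other way: set $\Lb''=\la\Lb',f_k\ra$, the \emph{small} lattice that keeps only $f_k$. Then $x\in\Lb''$, the $e_i$ are still minimal in $\Lb''$ (same minimum as $\Lb$), and $[\Lb:\Lb'']=d/d_k$ forces $\imath(\Lb'')\le d_k$, since any minimal-vector sublattice of $\Lb''$ is also one of $\Lb$. Lemma~\ref{lemmaxind} now applies verbatim to $\Lb''$ with the basis $(e_i)$ and gives $\abs{b_i}\le d_k$ in one line. The moral: pass to the sublattice that \emph{contains} $x$ and $\Lb'$, not to the one that omits $f_k$.
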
 

\begin{proof} 
Set $\Lb''=\la\Lb',f_k\ra$. We have $[\Lb'':\Lb']=d_k$, 
hence $[\Lb:\Lb'']=\frac d{_k}$ and 
$\imath(\Lb'')\le\frac{\imath(\Lb)}{d/d_k}=d_k$, 
whence the result by Lemma~\ref{lemmaxind}. 
\end{proof} 

%%Corollary 3.5 
\begin{corol} \label{cor2elem} 
Suppose that $\Lb/\Lb'$ is $2$-elementary of order $2^\ell$ 
and that $\imath(\Lb)=2^\ell$. 
Then if $\cI(\Lb)$ strictly contains ${2^\ell}$, 
it contains $\{2^{\ell-1},2^\ell\}$. 
\end{corol}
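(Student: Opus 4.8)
The plan is to reduce the statement to the construction of a single sublattice $M\subset\Lb$ of index $2^{\ell-1}$ possessing a basis of minimal vectors of~$\Lb$. Once such an $M$ is produced, $\Lb/M$ is a quotient of $\Lb/\Lb'\cong(\Z/2\Z)^\ell$ of order $2^{\ell-1}$, hence isomorphic to $(\Z/2\Z)^{\ell-1}$, so that $2^{\ell-1}\in\cI(\Lb)$; combined with $2^\ell\in\cI(\Lb)$ (which holds by hypothesis, the given $\Lb'$ realizing the maximal index), this yields $\{2^{\ell-1},2^\ell\}\subseteq\cI(\Lb)$. All the content is therefore in building~$M$.

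First I would use the hypothesis $\cI(\Lb)\supsetneq\{2^\ell\}$ to exhibit a minimal vector $f$ of $\Lb$ lying outside~$\Lb'$. Pick a sublattice $\Lb''$ with a basis of minimal vectors and $\Lb/\Lb''\not\cong(\Z/2\Z)^\ell$. If $\Lb''$ were contained in~$\Lb'$, then $[\Lb:\Lb'']\ge[\Lb:\Lb']=2^\ell=\imath(\Lb)\ge[\Lb:\Lb'']$, forcing $\Lb''=\Lb'$ and $\Lb/\Lb''\cong(\Z/2\Z)^\ell$, a contradiction. Hence some basis vector $f$ of $\Lb''$ is a minimal vector of $\Lb$ with $f\notin\Lb'$, and $2f\in\Lb'$ since $\Lb/\Lb'$ is $2$-elementary.

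I would then set $M=\la\Lb',f\ra$. Since $f\notin\Lb'$ and $2f\in\Lb'$ we have $[M:\Lb']=2$, so $[\Lb:M]=2^{\ell-1}$. The delicate point, and the step I expect to be the main obstacle, is to check that $M$ has a genuine \emph{basis} of minimal vectors of $\Lb$ and not merely a generating set of minimal vectors; this is exactly where Corollary~\ref{cormaxind} enters. Writing $\bar f$ for the image of $f$ in $\Lb/\Lb'\cong(\Z/2\Z)^\ell$, which is a nonzero vector and hence part of a $\Z/2\Z$-basis, I would choose a decomposition $\Lb=\la\Lb',f_1,\dots,f_\ell\ra$ with $f_1=f$ and each $f_k$ of denominator $d_k=2$ over~$\Lb'$, so that $d_1\cdots d_\ell=2^\ell=\imath(\Lb)$. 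Corollary~\ref{cormaxind} applied to the minimal vector $f\in f_1+\Lb'$ then bounds the coefficients of $2f=c_1e_1+\dots+c_ne_n$ by $\abs{c_i}\le2$. Since $f\notin\Lb'$, some $c_j$ is odd, and $\abs{c_j}\le2$ forces $c_j=\pm1$.

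Finally, with such a $j$ in hand I would swap $e_j$ for $f$: from $c_je_j=2f-\sum_{i\ne j}c_ie_i$ and $c_j=\pm1$ one recovers $e_j$, hence all the $e_i$ together with $f$, from the $n$ vectors $f$ and $(e_i)_{i\ne j}$; these therefore form a basis of $M$, and they are all minimal vectors of~$\Lb$. This produces the required $M$ and completes the proof. Apart from the coefficient bound, everything reduces to elementary index bookkeeping; the role of Corollary~\ref{cormaxind} is precisely to upgrade the merely set-theoretic "generated by minimal vectors" to "has a basis of minimal vectors" by excluding large coefficients.
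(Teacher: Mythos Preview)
Your proof is correct and follows essentially the same route as the paper: produce a minimal vector $f\in S(\Lb)\sm\Lb'$, write it as $\frac{\sum c_ie_i}2$, invoke Corollary~\ref{cormaxind} to get $\abs{c_i}\le 2$ and hence some $c_j=\pm 1$, then swap $e_j$ for $f$ to obtain a basis of minimal vectors for $\la\Lb',f\ra$. Your argument is simply more explicit where the paper is terse---in particular, the paper compresses your first paragraph into the single clause ``the hypothesis shows that $S(\Lb)$ strictly contains $S(\Lb')$'', and applies Corollary~\ref{cormaxind} without spelling out the choice of $f_1,\dots,f_\ell$.
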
 

\begin{proof} 
The hypothesis shows that $S(\Lb)$ strictly contains $S(\Lb')$, 
hence that there exists $x\in S(\Lb)\sm\Lb'$. Since $\Lb/\Lb'$ 
is $2$-elementary, $x$ is of the form $\frac{\sum\,a_i e_i}2$. 
By Corollary~\ref{cormaxind}, we have $\abs{a_i}\le 2$, 
and since $x\notin\Lb'$, $a_i$ is odd for some subscript~$i$. 
Let $\Lb''=\la\Lb',x\ra$. Replacing $e_i$ by $x$ for such a subscript, 
we obtain a basis of minimal vectors for $\Lb''$, and since 
$\Lb/\Lb''$ is $2$-elementary, $2^{\ell-1}$ belongs to $\cI(\Lb)$. 
\end{proof} 

We now return to index systems for lattices of maximal index~$4$. 

%%Proposition 3.6 
\begin{prop} \label{propind34} 
\begin{enumerate} 
\item 
If $\cI(\Lb)=\{2,3,4\}$ or $\{2,3,4,2^2\}$, then $n\ge 11$. 
\item 
If $\cI(\Lb)=\{3,4\}$, then $n\ge 15$. 
\end{enumerate} 
%\newline 
\noi{\small\rm 
[By Corollary~\ref{cor2elem}, the index system $\{3,4,2^2\}$ 
does not exist.]} 
\end{prop}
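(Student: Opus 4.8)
The plan is to get $\{3,4,2^2\}$ for free from Corollary~\ref{cor2elem}, and to obtain the two inequalities by running the argument used for Theorem~\ref{thimax3} in the case $1\notin\cI$, now with $\imath(\Lb)=4$, using Watson's identity in denominators $2,3,4$ together with the coordinate bound of Lemma~\ref{lemmaxind}. The case $\{3,4,2^2\}$ is immediate: if it occurred, a suitable $\Lb'$ would make $\Lb/\Lb'$ $2$-elementary of order $2^2=4$ with $\imath(\Lb)=2^2$; since the system strictly contains $\{2^2\}$, Corollary~\ref{cor2elem} would force $2\in\cI(\Lb)$, which is false, so no dimension estimate is needed there.

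For part (1), assume $\cI(\Lb)$ is $\{2,3,4\}$ or $\{2,3,4,2^2\}$, so $\imath(\Lb)=4$, $3\in\cI(\Lb)$ and $1\notin\cI(\Lb)$. Since $3\in\cI(\Lb)$, fix $\Lb'$ with $\Lb/\Lb'$ cyclic of order $3$ and a basis $(e_1,\dots,e_n)$ of $\Lb'$ made of minimal vectors, and a minimal vector $x=\frac{a_1e_1+\dots+a_ne_n}{3}$ whose class generates $\Lb/\Lb'$, with $\gcd(a_1,\dots,a_n,3)=1$ and $\sum_i\abs{a_i}$ minimal. The basic device is: if $v=\frac{\sum b_ie_i}{d}\in\Lb$ is minimal, its class generates $\Lb/\Lb'$, and $b_j\ne0$, then replacing $e_j$ by $v$ yields a basis of minimal vectors of a full-rank sublattice with \emph{cyclic} quotient of order $\abs{b_j}$ in $\Lb$; hence $\abs{b_j}$ is $0$ or the order of a cyclic quotient in $\cI(\Lb)$, so (using also Lemma~\ref{lemmaxind}) $b_j\in\{0,\pm2,\pm3,\pm4\}$, and never $b_j=\pm1$. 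In particular $x-\sgn(a_i)e_i$, whose class still generates $\Lb/\Lb'$, has $i$-th coordinate $a_i-3\sgn(a_i)$, equal to $\mp1$ when $a_i=\pm2$ and to $\pm1$ when $a_i=\pm4$; so $x-\sgn(a_i)e_i$ is \emph{not} minimal for those $i$. Since $\gcd(a_\bullet,3)=1$ forces some $a_i\not\equiv0\pmod3$, Watson's identity for denominator $3$ becomes a strict inequality, and the argument of Theorem~\ref{thimax3} goes through (the only new admissible value $\pm4$ plays exactly the role of $\pm1$ there, as $4\equiv1\pmod3$), producing a block $B_1$ of subscripts with $\abs{B_1}\ge7$. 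Using next $2\in\cI(\Lb)$, the averaging argument of \cite{M1}, Section~8, makes the relevant scalar products constant on blocks (the values $x_1,x_2,y_1$ of the proof of Theorem~\ref{thimax3}), and from this one builds a denominator-$2$ minimal vector supported essentially off $B_1$; Watson's identity for denominator $2$, again with the impossibility of a $\pm1$ coordinate, gives a further block $B_2$ disjoint from $B_1$ with $\abs{B_2}\ge4$. Hence $n\ge\abs{B_1}+\abs{B_2}\ge11$. The case $\{2,3,4,2^2\}$ is identical, the extra class being non-cyclic so the admissible coordinates stay $\{0,\pm2,\pm3,\pm4\}$.

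For part (2), now $\cI(\Lb)=\{3,4\}$, so also $2\notin\cI(\Lb)$. One runs the same scheme starting from a sublattice $\Lb'$ with $\Lb/\Lb'$ cyclic of order $4$, say $\Lb=\langle\Lb',e\rangle$ with $e=\frac{a_1e_1+\dots+a_ne_n}{4}$ minimal and $\sum\abs{a_i}$ minimal. Now the device allows cyclic quotients only of orders $3$ and $4$, so every minimal vector whose class generates $\Lb/\Lb'$ has coordinates in $\{0,\pm3,\pm4\}$, and $\gcd(a_\bullet,4)=1$ forces some $a_i=\pm3$. Looking at $\Lb''=\langle\Lb',2e\rangle$: since $[\Lb:\Lb'']=2$ and minimal vectors of $\Lb''$ are minimal vectors of $\Lb$, one has $\imath(\Lb'')\le2$ (as in the proof of Corollary~\ref{cormaxind}), so a minimal vector of $\Lb''$ outside $\Lb'$ would be $\frac{\sum c_ie_i}{2}$ with $\abs{c_i}\le2$ (Lemma~\ref{lemmaxind}) and some $c_i$ odd, hence some $c_i=\pm1$, which would put $2$ into $\cI(\Lb)$ — impossible; thus $S(\Lb)\cap\Lb''\subseteq\Lb'$. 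Feeding this back and exploiting that the admissible coordinate magnitudes are only $3$ and $4$ (so weight is expensive), Watson's identities for denominators $4,3,2$ force larger blocks and one obtains $n\ge15$ (e.g.\ a block of size $\ge7$ from the order-$3$ structure and a block of size $\ge8$ forced by $2\notin\cI(\Lb)$).

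The routine part throughout is the bookkeeping with Watson's identity and Lemma~\ref{lemmaxind}; the real obstacle, exactly as in Theorem~\ref{thimax3}, is to pass from a lower bound on the \emph{weight} $\sum_i\abs{a_i}$ of a code to a lower bound on the \emph{number of non-zero coordinates}, i.e.\ to rule out short codes of large weight. This is done by iterating the remark that the intermediate vectors $x-\sgn(a_i)e_i$, $x-\sgn(a_i)e_i-\sgn(a_j)e_j,\dots$ cannot be minimal (each would create a coordinate $\pm1$, hence $1\in\cI(\Lb)$), so the successive Watson identities have strictly positive right-hand sides and the support must spread over two (resp.\ three) disjoint blocks; the constant-scalar-product normalization of \cite{M1}, Section~8, is what keeps the resulting case analysis finite. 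Part (2) is harder than part (1) only because $2\notin\cI(\Lb)$ deletes the coordinate value $\pm2$, the cheapest way to accumulate weight, so the blocks — and with them $n$ — grow from $11$ to $15$.
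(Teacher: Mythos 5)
There is a genuine gap, and it is exactly at the point you flag yourself as ``the real obstacle''. Your preliminary device is fine: a minimal vector whose class generates the quotient cannot have a coordinate $\pm1$, and by Lemma~\ref{lemmaxind} its coordinates lie in $\{0,\pm2,\pm3,\pm4\}$. But from this, Watson's identity for denominator~$3$ applied to $x=\frac{\sum a_ie_i}3$ only gives a lower bound on the \emph{weight} $\sum\abs{a_i}$ (strictness gives $\ge 7$), and with coordinates as large as $3$ or $4$ a weight $\ge 7$ is compatible with a support of size~$2$. Your proposed repair --- ``iterating'' the identity on $x-\sgn(a_i)e_i-\sgn(a_j)e_j,\dots$ --- has no mechanism behind it: Watson's identity only involves the one-step vectors $x-\sgn(a_i)e_i$, non-minimality of multi-step vectors enters no inequality you have written, and for $a_i=\pm3$ the one-step vector creates a coordinate $0$, not $\pm1$, so it is not even excluded. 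Hence the claimed block $B_1$ with $\abs{B_1}\ge 7$ is unproved. The second block is worse: you assert that a denominator-$2$ minimal vector can be built ``supported essentially off $B_1$'', invoking the averaging argument; but averaging replaces $\Lb$ by the smallest class realizing a code and can strictly shrink the index system, so it cannot be used inside a lower-bound proof for the given $\Lb$, and nothing in your setup forces the order-$2$ and order-$3$ supports to be disjoint. Part~(2) is a statement of intent (``blocks of size $\ge7$ and $\ge8$'') with no derivation of either size.

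Compare with how the paper gets both the support bound and the disjointness for free. It anchors everything on the order-$4$ quotient in normal form, $e=\frac{e_1+\dots+e_{m_1}+2(e_{m_1+1}+\dots+e_m)}4$, uses $3\in\cI$ to produce a minimal $x\in e+\Lb'$, and uses $1\notin\cI$ with Corollary~\ref{cormaxind} to pin the coefficients: $a_i=-3$ for $i\le m_1$, $a_i=\pm2$ for $m_1<i\le m$, $a_i=0,\pm4$ beyond. The point of the value $-3$ is that, over the basis $(x,e_2,\dots,e_n)$, the coset of $e_1$ contains a denominator-$3$ vector whose \emph{reduced} coefficients are all $\pm1$ and whose support is $\{x\}\cup\{i>m_1:\ a_i\ne0\}$ --- disjoint from $\{1,\dots,m_1\}$ by construction, and with weight equal to support, so Watson directly yields $1+m_2+\#\{i>m:a_i\ne0\}\ge 7$. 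The first block's size is handled separately ($m_1\ge5$ because $e'=\frac{e_1+\dots+e_{m_1}}2$ cannot be minimal when $1\notin\cI$; and $m_1\ge9$ in part (2), where $2\notin\cI$ forces $m_2=0$ and rules out $e-e_1$ minimal), giving $n\ge m_1+6\ge 11$, resp.\ $\ge 15$. Your sketch, which starts from the order-$3$ quotient and treats the order-$2$ structure as an independent ingredient, supplies neither the reduction to $\pm1$ coefficients nor the disjointness, so the inequalities $n\ge11$ and $n\ge15$ are not established. Only your remark disposing of $\{3,4,2^2\}$ via Corollary~\ref{cor2elem} is complete, and that is the part the paper also dismisses in one line.
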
 

\begin{proof} 
We start as above with $\Lb=\la\Lb',e\ra$ and 
$$e=\frac{e_1+\dots+e_{m_1}+2(e_{m1+1}+\dots+e_m)}4= 
\frac{e'+e_{m1+1}+\dots+e_m}2$$
($m=m_1+m_2\le n$, $e'=\frac{e_1+\dots+e_{m_1}}2$). 
Since $3\in\cI(\Lb)$, $S(\Lb)$ is not contained in $S(\la\Lb',e'\ra$, 
so that there exists a minimal vector $x\in e+\Lb$, say, 
\linebreak $x=\frac{a_1 e_1+\dots+a_n e_n}4$. 
As above we have $a_i=-3$ if $i\le m_1$, $a_i=\pm 2$ if $m_1<i\le m$, 
$a_i=0,\pm 4$ if $i>m$, and indeed $a_i\ne 0$ if $n$ is minimal. 
Since $1\notin\cI$, $e'$ cannot be minimal, which implies $m_1\ge 5$. 
Using the denominator~$3$ provided by the $a_i$ with $i\le m$, 
we see that we must have $1+m_2+(n-m)\ge 7$, 
i.e., $n\ge m_1+6\ge 11$. 

If moreover $2\notin\cI$, we must have $m_2=0$, hence $m_1\ge 8$ 
by Watson's identity, and even $m_1\ge 9$ because $e-e_1$ cannot 
be minimal, hence finally $n\ge m_1+6\ge 15$. 
\end{proof} 

The case of an index system $\{2,3,2^2\}$ is slightly more 
complicated. 

%%Proposition 3.7 
\begin{prop} \label{propind32^2} 
If $\cI(\Lb)=\{2,3,2^2\}$ or $\{2,3,4,2^2\}$, then $n\ge 13$. 
\end{prop}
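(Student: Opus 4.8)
The plan is to follow the same strategy as in the proof of Proposition~\ref{propind34}, adapting it to the extra constraint coming from the factor $2^2$ in the quotient. Since $\cI(\Lb)$ contains $2^2$ but (by inspection of the required system) cannot contain $4$ when the system is $\{2,3,2^2\}$, the lattice $\Lb$ is generated over a Minkowskian sublattice $\Lb'$ by two vectors producing a $2^2$-quotient, say $\Lb=\la\Lb',f_1,f_2\ra$ with $f_k=\frac{\sum_i a_i^{(k)}e_i}{2}$ and $[\Lb:\Lb']=4$; in the case $\{2,3,4,2^2\}$ one has instead $\imath(\Lb)=4$ with a cyclic quotient of order $4$ present as well, and one uses Corollary~\ref{cormaxind} to bound coefficients. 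First I would record that, because $1\notin\cI(\Lb)$, none of the ``short'' combinations $f_k$, $f_1\pm f_2$ restricted to their supports can be a minimal vector differing from a basis element, which forces each of the relevant supports to have cardinality at least $5$ (the same threshold that appears repeatedly above via Watson's identity for denominator~$2$).

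Next I would produce the order-$3$ witness: since $3\in\cI(\Lb)$, there is a minimal vector $x$ with denominator $3$, $x=\frac{a_1e_1+\dots+a_ne_n}{3}$, and by Lemma~\ref{lemmaxind} (or Corollary~\ref{cormaxind} in the $\{2,3,4,2^2\}$ case) the coefficients satisfy $|a_i|\le 3$; reducing modulo $3$ against the code and using $1\notin\cI$ pins down the $a_i$ to the values $1$, $-2$, or (off the support) $0,\pm3$, exactly as in the displayed argument for Proposition~\ref{propind34}. Watson's identity applied to this denominator-$3$ vector then yields a lower bound on the number of subscripts involved, and the crucial point is that the $2^2$-structure means the support ``splits'' across $f_1$ and $f_2$: the subscripts where $a_i=-2$ must be numerous enough, and separately the subscripts where $a_i=\pm3$ (those outside the combined support of the $f_k$) must be numerous enough, because otherwise one of $x-e_i$ or a suitable averaged vector $\frac{x+e_j+\dots}{2}$ becomes minimal, reintroducing $1\in\cI$. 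Adding these contributions — a contribution of at least $7$ from the denominator-$3$ analysis together with the ``$\ge 5$'' contributions forced by the two non-minimality conditions on the order-$2$ combinations, but now applied to \emph{two} independent directions rather than one — should give $n\ge 13$.

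The main obstacle, and where the count differs from the $n\ge 11$ of Proposition~\ref{propind34}, is the bookkeeping of how the support of the denominator-$3$ vector $x$ interacts simultaneously with the supports of $f_1$, $f_2$, and $f_1+f_2$. In the cyclic case one only had to prevent $e'$ and one averaged vector from being minimal; here there are several order-$2$ sublattices $\la\Lb',g\ra$ (with $g$ ranging over $f_1,f_2,f_1+f_2$ and their ``halved'' pieces) each of which, if its quotient were to drop an index without producing $1$, would contradict the hypothesis, so each imposes its own support-size inequality. I would carefully choose $n$ minimal and orient the $e_i$ so that on the support of each $g$ the coefficients of $x$ are as constrained as possible, then argue that the three inequalities cannot all be met with $n\le 12$. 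For the variant $\{2,3,4,2^2\}$ I expect the same bound $n\ge 13$ to drop out with essentially no change, since the presence of the cyclic $4$ only strengthens (via Corollary~\ref{cormaxind}) the coefficient bounds used above; I would remark that the apparently stronger system does not lower $n_{\min}$.
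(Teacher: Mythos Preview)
Your proposal contains a genuine gap at its central step. You write ``since $3\in\cI(\Lb)$, there is a minimal vector $x$ with denominator $3$, $x=\frac{a_1e_1+\dots+a_ne_n}{3}$'', but this is false: here $\Lb/\Lb'$ is $2$-elementary, so every vector of $\Lb$ lies in $\tfrac12\Lb'$ and has denominator $1$ or $2$ on the basis $(e_1,\dots,e_n)$. The condition $3\in\cI(\Lb)$ only says that \emph{some other} sublattice $\Lb_0$ generated by minimal vectors has $[\Lb:\Lb_0]=3$; it does not hand you a denominator-$3$ vector over $\Lb'$. (In Proposition~\ref{propind34} the analogous minimal vector $x$ also has denominator~$4$, not~$3$; there too $3\in\cI$ is used only to ensure $S(\Lb)\not\subset\la\Lb',e'\ra$.) Consequently your coefficient analysis ``$a_i\in\{1,-2\}$ or $\{0,\pm3\}$'' and the Watson-identity count built on it never get off the ground.

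The paper's argument extracts the information from $3\in\cI$ differently. Since $3$ is odd and $\Lb/\Lb'$ has order~$4$, the minimal vectors of $\Lb$ cannot all lie in a single index-$2$ sublattice above $\Lb'$; hence at least two of the three non-trivial cosets contain minimal vectors, say $x=\frac{\sum a_i e_i}{2}\in e+\Lb'$ and $y=\frac{\sum b_i e_i}{2}\in f+\Lb'$. After partitioning the support into $I,J,K$ and using the $1\notin\cI$ condition through the $2\times 2$ determinants $\bigl|\begin{smallmatrix}a_i&a_j\\b_i&b_j\end{smallmatrix}\bigr|\ne\pm1$, one pins down enough of the $a_i,b_i$ to obtain the key identity $2(x+y)=3g+\sum_{i>m}(a_i+b_i)e_i$. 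This forces at least five coordinates off the support of the code, whence $n\ge m+5\ge 8+5=13$. The denominator~$3$ thus appears only \emph{a posteriori}, in $x+y$, not as the denominator of a single minimal vector.
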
 

\begin{proof} 
We denote by $e$, $f$, and $g\equiv e+f\mod\Lb'$ representatives 
of the non-zero cosets of $\Lb/\Lb'$, chosen so as to have components 
$0$ or $\frac 12$. Since $1\notin\cI(\Lb)$, the supports 
of $e$, $f$, $g$ have cardinality at least $5$ 
(i.e., the code has weight $w\ge 5$), which implies $m\ge 8$. 
We split $\{1,\dots,m\}$ into three sets 
$I,J,K$ such that $e=\frac{\sum_{i\in I\cup J}\,e_i}2$ 
and $f=\frac{\sum_{i\in I\cup K}\,e_i}2$. 
Note that at least two of these sets 
are non-empty, and that exchanging $e,f$, 
we may assume that $\abs{I}\ge 3$. 
Since the odd index $3$ occurs in the index 
system, two of the cosets above, say, those of $e$ and $f$, 
contain minimal vectors, say, 
$$x=\frac{a_1 e_1+\dots+a_n e_n}2\ \nd\ 
y=\frac{b_1 e_1+\dots+b_n e_n}2\,.$$ 
By Corollary~\ref{cormaxind}, 
we have $a_i=\pm 1$ on $I\cup J$, $b_i=\pm 1$ on $K\cup J$, 
and $a_i,b_i=0,\pm 2$ otherwise, and not $(0,0)$ (for $i>m$) 
if we choose $n$ minimal. 

Since $1\notin\cC$, the determinants 
$\abs{\stx a_i&a_j\\b_i&b_j\estx}$ may not be equal to $\pm 1$. 
This proves that $b_i,i\in I$ and $a_j,j\in K$ are non-zero: 
taking $i\in I$ and $j\in K$ if $K\ne\emptyset$ 
and $j\in J$ otherwise, we obtain the determinants 
$\abs{\stx\pm 1&a_k\\b_i&\pm 1\estx}$ 
and $\abs{\stx\pm 1&\pm 1\\b_i&\pm 1\estx}$. 

Now, negating if need be some $e_i$ with $i\in I\cup J$, then $y$, 
then some $e_i$ with $i\in K$, we may assume that 
$a_i=+1$ on $I\cup J$, $b_{i_0}=+2$ for some $i_0\in I$, 
and $b_i=+1$ on $K$. On $i_0$ and $j\in J$, we have the determinant 
$\abs{\stx 1&1\\b_0&\pm 1\estx}$, hence $b_j=-1$. 
If $K\ne\emptyset$, since determinants 
$\pm 5$ are excluded (because $\imath=4$), we must have 
first $a_j=+2$ on $K$, then $b_i=+2$ on $I$. If $K=\emptyset$, 
we prove that $b_i=+2$ on $I$ by using one index $j\in J$. 

Now we have 
$$2(x+y)=3g+\!\sum_{i=m+1}^n(a_i+b_i)e_i\,,\,\text{ hence }\, 
x+y+\sum_{i=m+1}^n(a_i+b_i)e_i\equiv 0\!\!\!\mod 3\,.$$ 
This shows that at least five terms $a_i+b_i$ must be non-zero, 
which implies $n\ge m+5\ge 13$ 
\small{(and $a_i=b_i\pm 2$, $a_i=\pm 4$ and $b_i=\pm 4$ 
is impossible if $4\notin\cI$)}. 
\end{proof}

%%Section 4 
\section{Maximal index 4 and dimension 7} \label{secind4} 

The study of index $4$ is organized as follows: we distinguish three 
types of index systems, those in which index~$4$ occurs 
with $4$ alone, $2^2$ alone, or both $4$ and~$2^2$. In each case 
one has to consider $8$~possible systems, corresponding 
to the eight subsets of $\{1,2,3\}$ (including $\emptyset$). 
We obtain this way $24$ {\em a priori} possible systems. 
However, Corollary~\ref{cor2elem} shows that systems 
which strictly contain $\{2^2\}$ must contain $\{2,2^2\}$, 
so that at least seven systems are impossible. 
We state this result as a proposition: 

%%Proposition 4.1 
\begin{prop} \label{prop4imp} 
The seven index systems 
$\{1,2^2\}$, $\{3,2^2\}$, $\{1,3,2^2\}$, 
\linebreak 
$\{4,2^2\}$, $\{1,4,2^2\}$, $\{3,4,2^2\}$, $\{1,3,4,2^2\}$. 
are impossible for a lattice of maxi\-mal index~$4$. \qed 
\end{prop}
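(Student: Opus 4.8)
The plan is to derive all seven impossibilities as immediate consequences of Corollary~\ref{cor2elem}, which is the natural tool here since $\imath(\Lb)=4=2^2$ and all seven candidate systems in question contain the $2$-elementary structure $2^2$. First I would observe that the seven listed systems are exactly those subsets of $\{1,2,3,4,2^2\}$ which contain $2^2$, which strictly contain $\{2^2\}$ itself, and which do \emph{not} contain $2$; indeed, enumerating the subsets of $\{1,3,4\}$ and adjoining $2^2$ to each nonempty one gives precisely $\{1,2^2\}$, $\{3,2^2\}$, $\{1,3,2^2\}$, $\{4,2^2\}$, $\{1,3,4,2^2\}$, $\{3,4,2^2\}$, $\{1,4,2^2\}$ --- seven in all. (The eighth subset, $\emptyset$, gives $\{2^2\}$ itself, which is not excluded.)

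Next I would invoke the hypothesis $\imath(\Lb)=4$ together with the assumption $\Lb/\Lb'$ $2$-elementary of order $4$ — equivalently $\ell=2$ in Corollary~\ref{cor2elem} — to conclude that if $\cI(\Lb)$ strictly contains $\{2^2\}$ then it must contain $\{2,2^2\}$, in particular $2\in\cI(\Lb)$. Since none of the seven listed systems contains the structure $2$, each of them is therefore impossible. One small point to check is that in each of the seven cases the relevant lattice really does have $\Lb/\Lb'$ $2$-elementary of order $4$ with $\imath(\Lb)=4$: the structure $2^2$ appearing in $\cI(\Lb)$ forces the existence of a sublattice $\Lb'$ with $\Lb/\Lb'\cong(\Z/2\Z)^2$, and since $\imath(\Lb)=4$ by hypothesis, this $\Lb'$ realizes the maximal index and $\Lb/\Lb'$ is $2$-elementary of order $2^2$, so Corollary~\ref{cor2elem} applies verbatim with this choice of $\Lb'$ playing the role of the quotient attaining $\imath$.

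I do not expect any genuine obstacle: the proposition is essentially a bookkeeping corollary of Corollary~\ref{cor2elem}, and the only care needed is to present the list of seven systems as precisely the ``strictly contains $2^2$ but omits $2$'' family and to note that the twenty-four \emph{a priori} systems minus these seven leaves seventeen, matching the count announced in the surrounding text. If anything is delicate it is the verification that the hypotheses of Corollary~\ref{cor2elem} are met in each case rather than the logical step itself; this is handled by the observation in the previous paragraph that the presence of $2^2$ in the index system already supplies a sublattice of index $4$ with $2$-elementary quotient.

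\begin{proof}
The seven listed index systems are precisely those of the form $\{2^2\}\cup T$ with $T$ a nonempty subset of $\{1,3,4\}$; in particular each of them strictly contains $\{2^2\}$ and none of them contains the structure $2$. Suppose $\Lb$ has maximal index $4$ and $\cI(\Lb)$ is one of these seven systems. Since $2^2\in\cI(\Lb)$, there is a sublattice $\Lb'$ of $\Lb$ with a basis of minimal vectors such that $\Lb/\Lb'$ is isomorphic to $(\Z/2\Z)^2$; as $[\Lb:\Lb']=4=\imath(\Lb)$, the quotient $\Lb/\Lb'$ is $2$-elementary of order $2^2$ and attains the maximal index. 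Thus the hypotheses of Corollary~\ref{cor2elem} hold with $\ell=2$. Because $\cI(\Lb)$ strictly contains $\{2^2\}$, that corollary forces $\{2^{\ell-1},2^\ell\}=\{2,2^2\}\subset\cI(\Lb)$, so $2\in\cI(\Lb)$, contradicting the fact that none of the seven systems contains $2$. Hence all seven are impossible for a lattice of maximal index~$4$.
\end{proof}
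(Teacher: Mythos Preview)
Your proof is correct and follows exactly the paper's approach: the proposition is stated with a \qed\ immediately after, and the sentence preceding it in the paper already explains that Corollary~\ref{cor2elem} forces any system strictly containing $\{2^2\}$ (under $\imath(\Lb)=4$) to contain $\{2,2^2\}$, which rules out precisely the seven listed systems. Your write-up simply spells this out carefully, including the verification that the hypotheses of Corollary~\ref{cor2elem} are met.
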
 

We know (Propositions \ref{propind34} and \ref{propind32^2}) 
that the remaining four index systems containing $3$ but not~$1$ 
(namely, $\{3,4\}$, $\{2,3,4\}$, $\{2,3,4,2^2\}$ and $\{2,3,2^2\}$) 
need \hbox{$n\ge 11$}, so that, in dimension~$7$, 
we are left with only $13$~systems, among which $\{1,2,2^2\}$ 
exists in dimension~$6$. 
We shall prove that all other systems except possibly 
$\{1,2,3,2^2\}$ do exist, and give the corresponding minimal dimension. 

%%Proposition 4.2 
\begin{prop} \label{prop4max} 
The %three 
systems $\{1,2,4,2^2\}$, $\{1,2,3,4\}$ 
and $\{1,2,3,4,2^2\}$ exist in dimension~$7$, 
and for any other system $\cI$ with $\imath=4$ and $4\in\cI$, 
we must have $(m_1,m_2)=(5,2)$ or $n\ge 8$. 
\end{prop}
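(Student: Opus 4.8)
\textbf{Proof plan for Proposition~\ref{prop4max}.}
The plan is to treat the three existence statements and the structural dichotomy separately, exploiting the fact that in dimension~$7$ the representative $e=\frac{e_1+\dots+e_{m_1}+2(e_{m_1+1}+\dots+e_m)}4$ has $m=m_1+m_2\le 7$, and that the proof of Proposition~\ref{propind34} forces $m_1\ge 5$ whenever $1\notin\cI$. First I would dispose of the existence half by explicit construction: for each of $\{1,2,4,2^2\}$, $\{1,2,3,4\}$, $\{1,2,3,4,2^2\}$ I would exhibit a rank-$7$ lattice $\Lb=\la\Lb',e,f,\dots\ra$ generated over $\Lb'=\Z^7$ by suitable vectors with denominators $4$ (and, for the $2^2$ systems, an extra denominator-$2$ vector), choosing the Gram parameters $x_i,y_{i,j}$ as in Subsection~\ref{subsecgenres} so that the prescribed minimal vectors — witnessing indices $2,3,4,2^2$ as required — are actually of norm~$1$ while nothing of smaller denominator-support appears; this is exactly the kind of \emph{PARI-GP} verification alluded to in the introduction, so I would only record the codes and the scalar-product values and assert that a direct check gives the claimed index system. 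The key point in choosing $(m_1,m_2)$ here: for a genuinely cyclic-of-order-$4$ generator one naturally lands on $(m_1,m_2)=(5,2)$, $m=7$, which is precisely the configuration singled out in the statement.

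The substantive part is the dichotomy: if $\cI(\Lb)$ contains $4$, $\imath(\Lb)=4$, $n=7$, and $\cI$ is none of the three listed systems, then either $(m_1,m_2)=(5,2)$ or $n\ge 8$ (the latter being vacuous here, so effectively: $(m_1,m_2)=(5,2)$). I would argue by ruling out all other admissible $(m_1,m_2)$ with $m_1+m_2\le 7$. If $1\in\cI$ then $e-e_1$ or a similar short combination would have to be minimal — but then Watson's identity (Subsection~\ref{subsecgenres}, Watson's identity) together with $\imath=4$ pins down $\cI$ among the three listed systems or a system already existing in dimension $\le 6$; so I would first reduce to $1\notin\cI$. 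With $1\notin\cI$, Proposition~\ref{propind34}'s analysis gives $m_1\ge 5$; combined with $m_1+m_2\le 7$ this leaves $(m_1,m_2)\in\{(5,0),(5,1),(5,2),(6,0),(6,1),(7,0)\}$. I would then eliminate $(5,0),(5,1)$ and the $m_1\ge 6$ cases by showing that in each of them a denominator-$3$ or denominator-$2$ short vector forced by some index in $\cI$ would need strictly more than $7$ coordinates — essentially the counting " $1+m_2+(n-m)\ge 7$ " from the proof of Proposition~\ref{propind34}, refined using Corollary~\ref{cormaxind} to bound the coefficients $\abs{a_i}$ of any minimal $x\in e+\Lb'$ by $4$. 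When $m_2=0$ the quotient is cyclic of order $4$ with all components $\frac14$, and one checks $e-e_i$ minimal forces $1\in\cI$, contradiction unless $m_1$ is large enough that $n=7$ is exceeded.

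The main obstacle I anticipate is the case analysis for the systems containing $3$ but not $1$ and with $m_2\ge 1$: here one has a mixed denominator-$4$/denominator-$2$ structure, and the argument of Proposition~\ref{propind32^2} (splitting the support into $I,J,K$ and controlling $2\times 2$ determinants to exclude $\pm1$ and $\pm5$) has to be rerun under the tighter constraint $m\le 7$ to see that only $(5,2)$ survives. A secondary subtlety is making sure the three "existence" constructions genuinely avoid producing, say, an index-$5$ or an index-$8$ minimal vector, since the scalar products must be tuned (as in the $x_1=\frac15$ versus $x_1=\frac1{21}$ dichotomy for $\E_7$-type examples in Section~\ref{secind3}); I would handle this by citing the explicit Gram matrices and the machine verification rather than by hand. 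Once these cases are closed, the statement follows, and the leftover possibility $\{1,2,3,2^2\}$ is flagged as the single undecided system, consistent with the remark preceding the proposition.
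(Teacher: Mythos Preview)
Your plan has a genuine gap in the dichotomy half, and it misses the key structural input that makes the paper's proof short.

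First, the reduction ``if $1\in\cI$ then $\cI$ is pinned down among the three listed systems or a system already existing in dimension $\le 6$'' is simply false. The system $\{1,2,4\}$ has $1\in\cI$, is not among the three listed, does not occur in dimension $\le 6$, and \emph{does} occur in dimension~$7$ (it is constructed later in the proof of Theorem~\ref{thn7} using $(m_1,m_2)=(5,2)$). So you cannot reduce to $1\notin\cI$; the whole case analysis built on that reduction collapses. Relatedly, your list of candidate pairs $\{(5,0),(5,1),(5,2),(6,0),(6,1),(7,0)\}$ omits $(4,3)$, which is one of the two non-$(5,2)$ cases that actually must be analysed.

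Second, you are missing the input that does the real work: Table~11.1 of \cite{M1} already classifies the admissible $(m_1,m_2)$ for a cyclic quotient of order~$4$ in dimension~$7$, and the answer is exactly $(4,3)$, $(5,2)$, $(6,1)$. The paper's argument is then: for $(4,3)$ the smallest minimal class (pairwise orthogonal $e_i$) has $\cI=\{1,2,4,2^2\}$; for $(6,1)$ Watson's identity forces $e-e_i\in S$, so the smallest class has $\cI\supseteq\{1,2,3,4\}$, and the averaging choice $x_1=\tfrac15$ gives equality. Since every lattice with a given $(m_1,m_2)$ lies above the corresponding smallest class, its index system contains that of the smallest class; with $\imath=4$ the only possible enlargements are to $\{1,2,3,4,2^2\}$, realised on $P_7^{10}$. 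Hence $(4,3)$ and $(6,1)$ yield only the three listed systems, and anything else must come from $(5,2)$ or $n\ge 8$. No case analysis on whether $1\in\cI$, no determinant bookkeeping, is needed.

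A smaller point: the three existence constructions in the paper come from $(4,3)$, $(6,1)$, and $P_7^{10}$, not from $(5,2)$ as you suggest; $(5,2)$ is reserved precisely for the \emph{other} systems treated afterwards. And the remark about $\{1,2,3,2^2\}$ is off-target here, since that system does not contain~$4$ and is handled separately in Theorem~\ref{thn7}.
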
 

\begin{proof} 
We know by \cite{M1} that $4\in\cI$ implies $n\ge 8$ 
or $(m_1,m_2)=(4,3)$, $(5,2)$ or $(6,1)$. If $(m_1,m_2)=(4,3)$, 
the smallest class is obtained with pairwise orthogonal $e_i$, 
and we can check that we then have $\cI=\{1,2,4,2^2\}$. 
Similarly, if $(m_1,m_2)=(6,1)$, Watson's identity shows 
that $S$ contains the vectors $e-e_i$, hence that $\cI$ 
contains $\{1,2,3,4\}$, and the averaging argument, taking 
$x_1=\frac 15$ (the only possible choice) produces a lattice 
with $\cI=\{1,2,3,4\}$. In both cases, the only larger system 
is $\{1,2,3,4,2^2\}$, that we know to exist (with $\Lb=P_7^{10}$). 
\end{proof} 

%%Proposition 4.3 
\begin{prop} \label{prop134} 
It $\cI=\{1,4\}$ or $\{1,3,4\}$, then $n\ge 9$, and these two systems 
exist in dimension~$9$. 
\end{prop}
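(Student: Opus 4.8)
The statement has two halves: a lower bound ($n\ge 9$) and an existence claim (both systems occur in dimension $9$). For the lower bound, I would start, as in the earlier propositions, by writing $\Lb=\la\Lb',e\ra$ with $e=\frac{e_1+\dots+e_{m_1}+2(e_{m_1+1}+\dots+e_m)}4$, where $m=m_1+m_2\le n$. Since $1\in\cI(\Lb)$, there \emph{is} a sublattice of index $2$ with a basis of minimal vectors; the key structural input we get from $4\in\cI$ but $2\notin\cI$ is precisely that the index-$2$ subgroup $\la\Lb',2e\ra/\Lb'$ cannot be split off by minimal vectors, whereas the whole cyclic-$4$ quotient can be "reached" (an index-$4$ sublattice $\Lb'$ with a minimal basis exists by hypothesis, since $4\in\cI$). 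The first task is to use Corollary~\ref{cormaxind} to pin down the shape of a minimal vector $x\in e+\Lb'$, say $x=\frac{\sum a_i e_i}4$: we get $a_i=\pm1$ or $\mp3$ for $i\le m_1$, $a_i=0,\pm2$ for $m_1<i\le m$, and $a_i=0,\pm4$ for $i>m$, taking $n$ minimal so that no $a_i$ vanishes for $i>m$.

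The heart of the lower-bound argument is to exploit the absence of index $2$ in $\cI$. Because $2\notin\cI$, neither $e$ nor any vector of $\la\Lb',e\ra$ reducible mod $2$ can be minimal unless its doubling is forced; concretely, the "$2$-part" $e'=\frac{e_1+\dots+e_{m_1}}2$ (the image of $2e$) must not support a minimal vector in its coset, which forces $m_1\ge 5$ — and I expect one can push this to $m_1\ge 6$ by the usual trick that $e-e_i$ (or a short combination) would otherwise be minimal and produce index $2$. Next, looking at $x$ through the denominator-$3$ lens supplied by the $a_i$ with $i\le m$ (Watson's identity for denominator $3$, as used in Propositions~\ref{propind34} and~\ref{propind32^2}), the support of the "$3$-reduction" must have size $\ge 6$, giving $1+m_2+(n-m)\ge 6$ (or $\ge 7$ if one can rule out the sharp case). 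Combining $m_1\ge 6$ with $m_2+(n-m)\ge 5$ yields $n\ge m_1 + (m_2+(n-m)) \ge 6 + (n-m) + m_2$, and with $m=m_1+m_2$ this rearranges to $n\ge m_1+5\ge\nobreak 9$. For $\{1,3,4\}$ versus $\{1,4\}$ the extra presence of $3$ in the system does not weaken the bound (it is already used), so the same $n\ge 9$ holds in both cases; one should double-check that the $\{1,4\}$ case, where $3\notin\cI$, does not in fact need a \emph{larger} bound — but since we are only asserting $n\ge 9$ and exhibiting examples in dimension $9$, matching bounds is all that is required.

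For the existence half, I would exhibit explicit $9$-dimensional lattices. The natural candidates are built by the averaging construction from the relevant code: take $\Lb'=\Z^9$ (or a well-rounded perturbation), $\Lb=\la\Lb',e\ra$ with $e=\frac{e_1+\dots+e_9+2(\cdot)}{4}$ chosen with $(m_1,m_2)$ on the boundary forced above — e.g. $m_1=9$, $m_2=0$, $n=9$ for the case $3\notin\cI$, so that $e=\frac{e_1+\dots+e_9}4$ with an odd-order code — and then tune the constant scalar products $x_1=e_i\cdot e_j$ so that $S(\Lb)=S(\Lb')\cup\{\pm e\}$ and no vector of order $2$ becomes minimal. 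A computation (the paper says most such constructions were done with PARI-GP) then verifies $\cI(\Lb)=\{1,4\}$ for one choice of $x_1$ and $\cI(\Lb)=\{1,3,4\}$ for another (adding a generator or adjusting $x_1$ so that $e-2e_i$-type vectors of order $3$ appear while order-$2$ vectors still do not). The main obstacle is the lower bound: getting the sharp constants $m_1\ge 6$ and $m_2+(n-m)\ge 5$ simultaneously from the two Watson identities while correctly handling the "$n$ minimal" reductions and the interaction between the $3$-support and the $2$-support — this is exactly the kind of bookkeeping that went slightly differently in Propositions~\ref{propind34} and~\ref{propind32^2}, and here the presence of $1$ in $\cI$ changes which reductions are available, so the elimination of the borderline sub-cases $m_1=5$ and $m_2+(n-m)=4$ is where the real work lies.
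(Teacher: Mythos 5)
There is a genuine gap in the lower-bound half, and it is not just the bookkeeping you flag at the end: the strategy itself is transplanted from the wrong situation. You try to run the denominator-$3$ argument of Proposition~\ref{propind34}, but that argument only works because there $1\notin\cI$ forces every coefficient $a_i$, $i\le m_1$, of the minimal vector $x\in e+\Lb'$ to equal $-3$; here $1\in\cI$, the coefficients can all be $+1$, and there is no denominator-$3$ structure to exploit. Worse, the inequality you aim for, $m_2+(n-m)\ge 5$, is simply false in this setting: in the extremal examples one has $m_2=0$ and $n=m$. (Your arithmetic also does not close: $m_1\ge 6$ together with $m_2+(n-m)\ge 5$ would give $n\ge 11$, contradicting the existence of $9$-dimensional examples, so something had to be wrong.) The actual argument is much shorter and goes the other way around. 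Since $1\in\cI$, the lattice $\Lb$ has a basis of minimal vectors (not ``a sublattice of index $2$'' as you wrote), so the coset $e+\Lb'$ itself contains a minimal vector $x=\frac{\sum a_ie_i}4$. If $m_2>0$, Corollary~\ref{cormaxind} forces some $a_i=\pm 2$, and replacing $e_i$ by $x$ produces an index-$2$ sublattice with a minimal basis (the index computation of Lemma~\ref{lemmaxind} gives $\frac d{d'}\abs{a_i'}=2$), contradicting $2\notin\cI$. Hence $m_2=0$ and $m=m_1\ge 8$ by Watson's identity; and if $m_1=8$ the equality case of Watson makes all $e-e_i$ minimal, which yields $\{1,2,3,4\}\subset\cI$, again contradicting $2\notin\cI$. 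So $n\ge m_1\ge 9$. This is the step your proposal is missing.

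The existence half of your plan is essentially right and matches the paper: take $n=m_1=9$, $m_2=0$, and tune the constant scalar products so that $S(\Lb)=S(\Lb')\cup\{\pm e\}$ (giving $\{1,4\}$) or $S(\Lb)=S(\Lb')\cup\{\pm(e-e_1)\}$ (giving $\{1,3,4\}$, since substituting $e-e_1$ for $e_1$ has index $3$); the paper records explicit values $e_i\cdot e_j=7/72$, respectively $e_1\cdot e_j=9/40$ and $e_i\cdot e_j=7/40$. Your description of where the index $3$ comes from (``$e-2e_i$-type vectors'') is off, but the construction principle is the correct one.
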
 

\begin{proof} 
We keep the usual notation $\Lb$, $\Lb'$, $m_1$, $m_2$, $m=m_1+m_2$ and 
\linebreak 
$e=\frac{\sum_{i=1}^n\,a_i e_i}4$. Since $1\in\cI$, the coset $e+\Lb'$ 
contains a minimal vector~$x$. If $m_2>0$, the numerator of $x$ 
has a component $\pm 2$, which implies $2\in\cI$. 
We thus have $m_2=0$, hence $m=m_1\ge 8$, and if $m_1=8$, 
Watson's identity shows that the vectors $e-e_i,i\le m$ are minimal, 
hence that $\cI$ contains $\{1,2,3,4\}$. 
This proves the lower bounds $n\ge m=m_1\ge 9$. 

Take $m_2=0$ and consider the systems $S_1=S(\Lb')\cup\{\pm e\}$ 
and $S_2=S(\Lb')\cup\{\pm(e-e_1)\}$. 
It is easily checked that the index system of $S_1$ (resp. $S_2$) 
is $\{1,4\}$ (resp. $\{1,3,4\}$). 
Lattices with $n=m_1=9$ and $S=S_1$ or $S_2$ are constructed 
as follows. For $S_1$, take $e_i\cdot e_j=7/72$ for $1\le i<j\le 9$. 
For $S_2$, take $e_1\cdot e_j=9/40$ for $2\le j\le 9$ 
and $e_i\cdot e_j=7/40$ for $2\le i<j\le 9$. 
\end{proof} 

%%Proposition 4.4 
\begin{prop} \label{prop242^2} 
If $\cI=\{2^2\}$ or $\{2,4,2^2\}$, then $n\ge 8$, and these systems 
exist in dimension~$8$. 
\end{prop}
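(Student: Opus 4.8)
The plan is to follow the by-now familiar template used in the proofs of Propositions~\ref{propind34} and~\ref{propind32^2}, specializing it to $2$-elementary quotients of order~$4$. Write $\Lb=\la\Lb',e,f\ra$ with $e,f$ chosen with components $0,\frac12$, and let $g\equiv e+f\bmod\Lb'$. Since $1\notin\cI$, each of the three non-trivial cosets that carries a minimal vector forces its support to have weight $\ge 5$; the hard part, as always, is to turn this ``weight $\ge 5$'' information together with the constraint $\imath=4$ (which bounds all numerators by~$2$ via Corollary~\ref{cormaxind}) into a clean lower bound on the common support size~$m$ and then on~$n$. I expect this combinatorial bookkeeping over the partition $\{1,\dots,m\}=I\cup J\cup K$ (with $e$ supported on $I\cup J$, $f$ on $I\cup K$) to be the main obstacle, exactly as in Proposition~\ref{propind32^2}, but here the absence of index~$3$ makes the determinant argument simpler: the forbidden determinants are only $\pm1$ (since there is no odd divisor to exploit), so I get less leverage and must instead lean on counting which coordinates can carry the entry~$2$.

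First I would dispose of the case $\cI=\{2^2\}$. Here $1,2\notin\cI$, so neither $e$, $f$, $g$ nor $e-e_i$ etc.\ is minimal, and a minimal vector $x$ lies in one of the cosets, say $e+\Lb'$, with $x=\frac{\sum a_ie_i}{2}$ and $|a_i|\le 2$. Since $x\notin\Lb'$ some $a_i$ is odd, and since $1\notin\cI$ the support of $e$ (equivalently of $x$ after clearing the even entries) has weight $\ge 5$, forcing $m_{\mathrm{supp}}\ge 5$ just from $e$; but the same applies to $f$ and to $g$, and the mutual constraints (no $\pm1$ two-by-two determinant, $x$ and a minimal $y\in f+\Lb'$ cannot differ by a single $e_i$) push the total support up. I would carry out the same normalization as in Proposition~\ref{propind32^2} — negate coordinates to make the $\pm1$ entries equal to $+1$, locate a $+2$ entry of $y$ on $I$, etc.\ — to conclude $m\ge 8$ and then, using $x+y\equiv g\pmod{2}$ rewritten modulo the relevant torsion, that enough of the tail coordinates $a_i+b_i$ ($i>m$) are non-zero to give $n\ge 8$. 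The case $\cI=\{2,4,2^2\}$ is handled in parallel: now $2\in\cI$ is allowed, but $1\notin\cI$ still forbids weight $\le 4$, and the presence of a genuine order-$4$ element (i.e.\ a minimal vector with a $\pm2$ entry that is \emph{not} killed by passing to a sublattice) is what separates this from the pure $2$-elementary situation; I expect the same weight-$5$ inputs to yield $n\ge 8$.

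For the existence half I would exhibit explicit lattices in dimension~$8$. The natural candidate for $\cI=\{2^2\}$ is a trivial extension: take a $7$-dimensional $\Lb_0$ with $\cI(\Lb_0)=\{2^2\}$ — recall $\D_6$ realizes $\{1,2,2^2\}$, so a suitably ``stiffened'' relative of it, or a direct numerical construction $\Lb=\la\Lb',e,f\ra$ with a common scalar product $e_i\cdot e_j=x_1$ chosen so that $S(\Lb)=S(\Lb')\cup\{\pm x,\pm y\}$ for the two order-$2$-in-quotient vectors and nothing else — will do, and then $\Lb_0\perp\Z$ gives dimension~$8$ with the same index system by the remark that $\cI(\cC_{ext})=\cI(\cC)$. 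For $\cI=\{2,4,2^2\}$ I would instead look inside the perfect dimension-$8$ classes: by the classification recalled in Subsection~\ref{subsecspec} the perfect index systems in dimension~$8$ are $\cI_4,\cI_5,\cI_6,\cI_{5,6}$ together with the listed exceptions, and $\{2,4,2^2\}$ should appear as the index system of a non-perfect class sitting below one of these, or be produced directly by the averaging construction with a one-parameter choice of scalar product, verified with PARI-GP as the authors do elsewhere. The main risk in this half is simply finding scalar-product values that give \emph{exactly} the claimed minimal-vector set and no extra vectors; this is a finite check once the ansatz is fixed.
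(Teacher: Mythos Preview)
Your approach for $\cI=\{2^2\}$ cannot get started: you posit a minimal vector $x\in e+\Lb'$, but the hypothesis $2\notin\cI$ rules this out. Any minimal $x=\frac{\sum a_ie_i}{2}$ in a non-trivial coset has some $a_i=\pm 1$ (Corollary~\ref{cormaxind}), and replacing that $e_i$ by $x$ gives a basis of minimal vectors for the index-$2$ sublattice $\la\Lb',x\ra$, forcing $2\in\cI$. Thus $\cI=\{2^2\}$ implies $S(\Lb)\subset\Lb'$, and the template of Proposition~\ref{propind32^2} has no coset vectors to feed into the determinant bookkeeping. Your auxiliary claim that ``$1\notin\cI$ forbids weight $\le 4$'' is also false: the length-$7$ code with weight system $4\cdot 5^2$ and orthogonal $e_i$ has $\cI=\{2,2^2\}$. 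The paper's argument is instead purely code-theoretic: a weight-$4$ word lifts (by Watson's identity) to a $\D_4$-section and hence forces $2\in\cI$; so all three non-zero words of the dimension-$2$ binary code must have weight $\ge 5$, whence $2m\ge 15$ and $n\ge m\ge 8$. Existence comes from the length-$8$ code with weight system $5^2\cdot 6$ and orthogonal $e_i$; your extension from dimension~$7$ is circular (you are proving no such lattice exists there), and your alternative with $S(\Lb)=S(\Lb')\cup\{\pm x,\pm y\}$ again yields $2\in\cI$ by the same replacement argument.

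For $\cI=\{2,4,2^2\}$ the paper does not use the $2$-elementary template either, and your ``same weight-$5$ inputs'' do not apply since here $2\in\cI$ permits weight-$4$ words. Instead the paper writes $\Lb/\Lb'\simeq\Z/4\Z$ (via $4\in\cI$), deduces from $2\in\cI$ and $1\notin\cI$ that $\la S(\Lb)\ra$ is the index-$2$ lattice between $\Lb'$ and $\Lb$, and uses $2^2\in\cI$ to obtain a sublattice $M$ with $\Lb/M\simeq 2^2$ whose binary code in length~$7$ necessarily contains a weight-$4$ word, producing a $\D_4$-section. This section lets one realize the cyclic-$4$ quotient with $(m_1,m_2)=(4,3)$; but by Proposition~\ref{prop4max} the smallest class for $(4,3)$ already has $1\in\cI$ --- contradiction. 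Existence in dimension~$8$ is via $(m_1,m_2)=(4,4)$ with pairwise orthogonal~$e_i$.
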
 

\begin{proof} 
Since $2^2\in\cI$, both systems may be constructed with a binary 
code of weight $w\ge 4$. Since a word of weight~$4$ lifts 
to a $\D_4$-section, the index system of a lattice constructed 
with a code of weight~$4$ contains~$2$. Hence, if $\cI=\{2^2\}$, 
we must have $n\ge 8$, and this condition suffices, 
since there exists a (unique) binary code of length~$8$, 
dimension~$2$, and weight system $5^2\cdot 6$. 

\smallskip 

Consider now the system $\{2,4,2^2\}$, and suppose that $n=7$. 
Since $4\in\cI$, we can write $\Lb$ with $\Lb/\Lb'$ 
cyclic of order~$4$, and since $\cI$ contains~$2$ but not~$1$, 
$S(\Lb)$ spans its sublattice $\Lb''$ which contains 
$\Lb'$ to index~$2$. Since $2^2\in\cI$, and since every code 
of length~$7$ and weight $w\ge 4$ has a word of weight~$4$, 
$\Lb''$ has a $\D_4$-section. This shows that we may take $m_1=4$, 
hence $m_2=3$, but we know that the corresponding smallest class 
$\cC$ has then index system $\{1,2,4,2^2\}$, a contradiction. 

This proves that $n\ge 8$, and taking $m_1=m_2=4$ and pairwise 
orthogonal scalar products, we obtain a lattice with one quotient 
of type~$2^2$ and two of type~$(4)$. 
\newline{\small 
[The averaging arguments applied on the one hand to cyclic quotients 
of order~$4$ with $m_1=m_2=4$, and on the other hand 
to the binary code with weight system $(4\cdot 5\cdot 7)$ yield 
the same lattice, which accounts for the existence of quotients 
of both the types $(4)$ and~$2^2$.]} 
\end{proof} 

We are now able to give the complete list 
of index structure in dimension~$7$. 

%%Theorem 4.5 
\begin{theorem} \label{thn7} 
Let $\Lb$ be a lattice of dimension $7$. 
Then the minimal structures which exist in this dimension 
are as follows: 
\begin{enumerate} 
\item 
$\imath\le 2$: $\{1\}$, $\{1,2\}$, $\{2\}$. 
\item 
$\imath=3$: $\{1,2,3\}$, $\{3\}$, $\{1,3\}$. 
\item 
$\imath=4$, $4\in\cI$, $2^2\notin\cI$: 
$\{4\}$, $\{2,4\}$, $\{1,2,4\}$, $\{1,2,3,4\}$. 
\item 
$\imath=4$, $2^2\in\cI$, $4\notin\cI$: 
$\{2,2^2\}$, $\{1,2,2^2\}$, $\{1,2,3,2^2\}$. 
\item 
$\imath=4$, $4\in\cI$, $2^2\in\cI$: 
$\{1,2,4,2^2\}$, $\{1,2,3,4,2^2\}$. 
\item 
$\imath=8$: $\{1,2,3,4,2^2,2^3\}$, attained uniquely on 
the class of~$\E_7$. 
\end{enumerate}\end{theorem}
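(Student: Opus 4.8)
The plan is to run through the possible values of the maximal index $\imath(\Lb)$ in dimension~$7$. Since $\g_7^{7/2}=8$ we have $\imath(\Lb)\le 8$, and Table~11.1 of~\cite{M1} together with the perfect-lattice classification recalled in Section~2 (no index system of a perfect lattice of dimension~$\le 7$ contains a quotient of order $5$, $6$ or~$7$, nor a cyclic quotient of order~$8$) leaves only $\imath(\Lb)\in\{1,2,3,4,8\}$. The cases $\imath\le 3$ are already settled: by Theorems~\ref{thimax3} and~\ref{thnmax6}, the systems realized in dimension~$\le 7$ are $\{1\},\{1,2\},\{2\}$ for $\imath\le 2$ and $\{1,2,3\},\{3\},\{1,3\}$ for $\imath=3$ (the systems $\{2,3\}$, $\{2,3,2^2\}$, $\{3,4\}$, \dots\ requiring $n\ge 11$). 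This gives parts~(1) and~(2).

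For $\imath=4$ I would start from the $24$ \emph{a priori} possible systems (the eight subsets of $\{1,2,3\}$ inside each of the three families ``$4$ alone'', ``$2^2$ alone'', ``$4$ and $2^2$'' of Section~4). Proposition~\ref{prop4imp} removes seven of them; of the remaining systems, Propositions~\ref{propind34} and~\ref{propind32^2} remove the four that contain $3$ but not~$1$ (they need $n\ge 11$ or~$13$), and Propositions~\ref{prop134} and~\ref{prop242^2} remove $\{1,4\}$, $\{1,3,4\}$, $\{2^2\}$, $\{2,4,2^2\}$ from dimension~$7$ (they need $n\ge 9$, resp.\ $n\ge 8$). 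Exactly the nine systems of parts~(3)--(5) survive, one of which ($\{1,2,2^2\}$) already occurs in dimension~$6$.

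It then remains to realize these nine systems in dimension~$7$. The systems $\{1,2,3,4\}$ and $\{1,2,4,2^2\}$ come from the $(m_1,m_2)=(6,1)$ and $(4,3)$ constructions of Proposition~\ref{prop4max}, and $\{1,2,3,4,2^2\}$ from $P_7^{10}$; the systems $\{4\}$, $\{2,4\}$, $\{1,2,4\}$ come from the averaging argument applied to a cyclic code of order~$4$ with $(m_1,m_2)=(5,2)$, choosing the common scalar products $x_1,x_2,y_1$ so as to annihilate precisely the unwanted minimal vectors (as in the proofs of Theorems~\ref{thimax3} and~\ref{thnmax6}); and $\{2,2^2\}$ comes from a direct construction with a binary code of length~$7$. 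The system $\{1,2,3,2^2\}$ --- whose existence in dimension~$7$ is the one point left open by Propositions~\ref{prop4max}--\ref{prop242^2} --- I would obtain as a degeneration of $P_7^{10}$: since $\cI(P_7^{10})=\{1,2,3,4,2^2\}$ strictly contains $\{1,2,3,2^2\}$, it suffices to exhibit a minimal class $\prec$ that of $P_7^{10}$ which destroys the cyclic quotient of order~$4$ but keeps the other four quotients, and I would locate such a lattice by the \emph{PARI-GP} search mentioned in the Introduction and confirm its index system by the naive algorithm of Subsection~\ref{subseccalcul}.

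Finally, for $\imath=8$: among the perfect classes of dimension~$7$ --- root lattices, a few classes with $\imath\le 2$, and those with index system $\{1,2,3\}$, $\{1,2,3,4\}$ or $\{1,2,3,4,2^2\}$ --- only the class of $\E_7$ has $\imath=8$, since $\imath(\A_7)=1$, $\imath(\D_7)\le 5$ by Hadamard's inequality, and the remaining classes have $\imath\le 4$. Hence a $7$-dimensional lattice $\Lb$ with $\imath(\Lb)=8$ has minimal class $\prec$ that of $\E_7$, so $\cI(\Lb)\subseteq\cI(\E_7)=\{1,2,3,4,2^2,2^3\}$; its order-$8$ quotient is neither cyclic (cyclic quotients of order~$8$ first occur in dimension~$9$) nor of type $4\cdot 2$ (otherwise $4\cdot 2\in\cI(\E_7)$), hence is of type $2^3$, and a scan of the minimal classes below that of $\E_7$ in~\cite{M1} shows that only the class of $\E_7$ itself carries a $2^3$-quotient; thus $\Lb\sim\E_7$, which is part~(6). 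In this whole scheme the single real difficulty is the construction for $\{1,2,3,2^2\}$: all other items are cited from Sections~2--4 or amount to the routine verification of an index system once the scalar products are fixed, whereas producing an explicit $7$-dimensional lattice whose index system is exactly $\{1,2,3,2^2\}$ --- with neither a cyclic order-$4$ quotient nor anything larger creeping in --- is essentially a computer-assisted step.
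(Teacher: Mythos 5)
Your skeleton coincides with the paper's: the same case split by $\imath$, the same use of Theorems~\ref{thimax3} and~\ref{thnmax6} for $\imath\le 3$, the same seven-plus-four-plus-four eliminations via Propositions~\ref{prop4imp}, \ref{propind34}, \ref{propind32^2}, \ref{prop134}, \ref{prop242^2} leaving exactly the nine systems of (3)--(5), the same appeal to Proposition~\ref{prop4max} and to the $(m_1,m_2)=(5,2)$ cyclic family, and the same citation of \cite{M1} for $\imath>4$. The difficulty is that the existence half of the theorem is only partly carried out. The paper certifies $\{4\}$, $\{2,4\}$, $\{1,2,4\}$ by exhibiting explicit parameter values ($x_1=3/20$, $x_1=1/4$, $x_1\in(3/20,1/4)$ with $x_2=y_1=0$), whereas you only assert that suitable scalar products can be chosen ``to annihilate the unwanted minimal vectors''; this is repairable, but it is not yet a proof. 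More seriously, the construction of $\{1,2,3,2^2\}$ --- the one genuinely new existence statement in item (4) --- is left entirely as a promissory computer search. You propose to find it among classes lying below the perfect class of $P_7^{10}$, but nothing in your argument (or in the paper) guarantees that such a class sits below $P_7^{10}$ at all: the paper's reasoning is only that a class with this system must lie below a $7$-dimensional perfect class whose index system contains $\{1,2,3,2^2\}$, hence below $\E_7$ or $P_7^{10}$, and the example it actually produces is the eutactic lattice on the Voronoi path with $s=32$ joining two copies of $\E_7$, with an explicit Gram matrix and a {\em PARI-GP} count showing that every order-$4$ quotient there is elementary. Until you (or a computation you actually perform) produce such a lattice, item (4) is unproved, and your commitment to searching below $P_7^{10}$ may simply fail.

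A smaller but real omission on the same side: $\{2,2^2\}$ in dimension~$7$ is dispatched in one clause (``a direct construction with a binary code of length~$7$'') with no code specified and no verification that the resulting class has neither $1$ nor a cyclic $4$ in its system; note that every two-dimensional binary code of length~$7$ with weights $\ge 4$ has a weight-$4$ word, so this needs an actual check (e.g.\ on the lift of the weight system $4\cdot 5^2$). Your argument for item (6) is sound in outline but ultimately rests, like the paper's one-line proof, on the classification in \cite{M1}; that part is fine. In short: the eliminations are correct and identical to the paper's, but the constructive steps that the paper settles with explicit lattices (notably the $s=32$ path lattice for $\{1,2,3,2^2\}$) are missing from your proposal.
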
 

\begin{proof} 
The case of $\imath>4$ results from \cite{M1}, 
and that of $\imath\le 3$ results from Theorems \ref{thnmax6} 
and~\ref{thimax3}. We are thus left with lattices 
of maximal index~$\imath=4$. 

\smallskip 

\bl $4\in\cI$ and $2^2\notin\cI$. Four out of the eight possible systems 
are excluded by Propositions \ref{prop134} and \ref{propind34}, 
and $\{1,2,3,4\}$ is known to exist by Proposition~\ref{prop4max}. 
We construct the three remaining structures using cyclic quotients 
with $(m_1,m_2)=(5,2)$ and the unique parameter $x_1$ 
(by averaging, we may choose $x_2=y_1=0$). With $x_1=3/20$, 
$x_1=1/4$, and any $x_1\in(3/20,1/4)$ (e.g., $x_1=1/5$), 
we obtain lattices with index systems $\{1,2,4\}$, $\{1,4\}$, 
and $\{4\}$, respectively. 

\smallskip 

\bl $2^2\in\cI$, $4\notin\cI$. Five out of eight possible systems 
are excluded by Propositions \ref{prop4imp}, \ref{prop242^2} 
and \ref{propind32^2}, so that we are left with the systems 
$\{1,2,2^2\}$ and $\{1,2,4,2^2\}$, which are known to exist 
by Theorem~\ref{thnmax6} and Proposition~\ref{prop4max}, 
and the system $\{1,2,3,2^2\}$. To construct an example having 
this index system, we observed that among perfect lattices, 
$4^2\in\cI$ holds only on $P_7^1=\E_7$ and $P_7^{10}$, 
and then $\cI$ contains both $4$ and $2^2$. 
This shows that minimal classes having the right system must lie below 
Voronoi paths connecting either of these two lattices. Among the eleven 
paths connecting two copies of $\E_7$, the one with $s=32$ 
proved convenient. A computation with {\em PARI-GP} showed that 
index $1$, $2$, $3$, $4$ appears $923766$, $21832$, $90$, 
and $6$ times, respectively, the last case only with 
an elementary quotient. In all cases the binary code (of length~$7$) 
is the code with weight system $4\cdot 5^2$. 
% Test the codes ; all $4\cdot 5^2$ ??? 
Here is a a Gram matrix belonging to this path 
(indeed, the eutactic one): 

\smallskip\ctl{\small 
$\left(\stx 4&2&2&-2&-2&-1&-1\\2&4&2&-2&-2&1&-2\\ 
2&2&4&0&0&-1&-2\\-2&-2&0&4&2&-1&0\\-2&-2&0&2&4&0&0\\ 
-1&1&-1&-1&0&4&-1\\ -1&-2&-2&0&0&-1&4\estx\right)\,.$} 

\smallskip 

\bl $4\in\cI$, $2^2\in\cI$. 
Six out of eight systems are excluded by Propositions~\ref{prop4imp}, 
\ref{prop242^2} and \ref{propind34}, 
and the remaining two systems exist by Proposition~\ref{prop4max}. 
\end{proof} 

%%Theorem 4.6 
\begin{theorem} \label{thi4} 
The seven index systems $\{1,2^2\}$, $\{3,2^2\}$, $\{1,3,2^2\}$, 
$\{4,2^2\}$, $\{1,4,2^2\}$, $\{3,4,2^2\}$ and $\{1,3,4,2^2\}$ 
do not exist. The other systems, except two for which existence 
is not known, are listed below together with the minimal dimension 
in which they exist: 

\smallskip 

\bl $n_{\min}=\ 6$: $\{1,2,2^2\}$. 

\bl $n_{\min}=\ 7$: $\{4\}$, $\{2,4\}$, $\{1,2,4\}$, $\{1,2,3,4\}$, 
                 $\{2,2^2\}$, $\{1,2,3,2^2\}$, 
\newline\phantom{\ \bl $n_{\min}=6$:\,}
$\{1,2,4,2^2\}$, $\{1,2,3,4,2^2\}$. 

\bl $n_{\min}=\ 8$: $\{2^2\}$, $\{2,4,2^2\}$. 

\bl $n_{\min}=\ 9$: $\{1,4\}$, $\{1,3,4\}$. 

\bl $n_{\min}=11$: $\{2,3,4\}$. 

\bl $n_{\min}=15$: $\{3,4\}$. 

\smallskip\noi 
For the systems $\{2,3,2^2\}$ and $\{2,3,4,2^2\}$, if any, 
we must have $n_{\min}\ge 13$. 
\end{theorem}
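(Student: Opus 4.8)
The plan is to read this statement off the work of Sections~\ref{secind3} and~\ref{secind4}: it is essentially a synthesis, and the only genuinely new content needed is an explicit lattice for each of the two largest systems. I would organise the proof in four steps.

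\emph{Step 1: the seven impossible systems.} Here I would simply invoke Proposition~\ref{prop4imp}. One first notes that if $\cI(\Lb)$ equals one of the seven listed systems then $\imath(\Lb)=4$, since $\imath(\Lb)$ is the largest order of a group occurring in $\cI(\Lb)$ and $2^2$ (order~$4$) is the maximum in each case. Then $\Lb$ has a Minkowskian sublattice with $2$-elementary quotient of order~$4$, so Corollary~\ref{cor2elem} applies with $\ell=2$: a system strictly containing $2^2$ must contain $\{2,2^2\}$, which none of the seven does. This argument is dimension-free, so the exclusion holds for every~$n$.

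\emph{Step 2: the systems with $n_{\min}\le 7$.} By Theorem~\ref{thnmax6}, the index systems realised in dimension $\le 6$ are exactly $\{1\}$, $\{1,2\}$, $\{2\}$, $\{1,2,3\}$ and $\{1,2,2^2\}$, the last appearing first in dimension~$6$; in that range an index~$4$ occurs only through $\Lb\sim\D_6$, whose quotient is $2$-elementary, so none of $\{4\}$, $\{2,4\}$, $\{1,2,4\}$, $\{1,2,3,4\}$, $\{2,2^2\}$, $\{1,2,3,2^2\}$, $\{1,2,4,2^2\}$, $\{1,2,3,4,2^2\}$ can be realised for $n\le 6$. Theorem~\ref{thn7} exhibits each of these eight systems in dimension~$7$. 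Hence $n_{\min}(\{1,2,2^2\})=6$ and $n_{\min}=7$ for the other eight.

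\emph{Step 3: the higher-dimensional systems.} For $\{2^2\}$ and $\{2,4,2^2\}$ I would quote Proposition~\ref{prop242^2}, which gives both the bound $n\ge 8$ and a dimension-$8$ construction; for $\{1,4\}$ and $\{1,3,4\}$, Proposition~\ref{prop134}, which gives $n\ge 9$ together with explicit Gram data in dimension~$9$. For $\{2,3,4\}$ and $\{3,4\}$, Proposition~\ref{propind34} supplies the lower bounds $n\ge 11$ and $n\ge 15$, and it remains to realise them at those dimensions. I would do this by starting from the extremal configurations isolated in the proof of Proposition~\ref{propind34} — for $\{2,3,4\}$, a cyclic quotient of order~$4$ with $m_1=5$ plus six further (essentially free) coordinates, giving $n=11$; for $\{3,4\}$, the analogous one with $m_1=9$, $m_2=0$ and six further coordinates, giving $n=15$ — reducing by the averaging argument of Section~\ref{secgen} to the few free scalar products, and tuning these so that exactly the prescribed cosets carry minimal vectors while the trivial coset does not (so that $1\notin\cI$); the resulting index system would be certified with {\em PARI-GP}, as for the other explicit lattices in the paper. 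This gives $n_{\min}=8$, $9$, $11$, $15$ respectively. Finally, for the two systems $\{2,3,2^2\}$ and $\{2,3,4,2^2\}$ whose existence is undecided, Proposition~\ref{propind32^2} gives $n\ge 13$ (for the second this improves the bound $n\ge 11$ of Proposition~\ref{propind34}).

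The hard part is the verification in Step~3: not the lower bounds, which are already available, but checking that the $11$- and $15$-dimensional lattices one writes down have index system \emph{exactly} $\{2,3,4\}$, resp.\ $\{3,4\}$ — that is, that no unintended coset acquires a minimal vector (which would add $1$, or some other structure, to $\cI$). This is a finite but large enumeration of Minkowskian sublattices, carried out by computer. I would then also check that the bounds of Propositions~\ref{prop134}, \ref{prop242^2}, \ref{propind34} and~\ref{propind32^2} match the constructions so that each $n_{\min}$ recorded is sharp, and that Proposition~\ref{prop4imp} genuinely excludes the seven systems in all dimensions and not merely for $n=7$.
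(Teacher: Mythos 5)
Your proposal is correct and takes essentially the same route as the paper: all assertions except those for $n_{\min}=11$ and $n_{\min}=15$ are read off Propositions \ref{prop4imp}, \ref{prop134}, \ref{prop242^2}, \ref{propind34}, \ref{propind32^2} and Theorems \ref{thnmax6}, \ref{thn7}, and the two remaining systems are realized exactly as you describe, from the extremal configurations of Proposition \ref{propind34} with averaged scalar products and a computer check of the resulting set of minimal vectors. The only content you leave implicit is the explicit data that the paper supplies to close your ``hard part'': it takes $(m_1,m_2)=(5,6)$ resp.\ $(9,0)$ with parameter values $(x_1,x_2)=(\frac19,\frac14)$ resp.\ $(\frac19,\frac19)$ (and $y_1$ determined so as to force the desired minimal vector), and exhibits integral Gram matrices with $s=n+1$ whose index systems are verified to be exactly $\{2,3,4\}$ and $\{3,4\}$.
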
 

\begin{proof} 
All the assertions above, are easy consequences of the results 
proved in this section and in the previous one, 
except those which concern $n_{\min}=11$ or~$15$, for which we must 
construct lattices having convenient sets of minimal vectors. 

An example for the index systems $\{2,3,4\}$, with $n=11$, 
(resp. $\{3,4\}$, with $n=15$) has been obtained 
taking $(m_1,m_2)=(5,6)$ (resp. $(9,0)$), and using three 
values for the scalar products $e_i\cdot e_j$, 
$x_1$ for $i<j\le m_1$, $x_2$ for $j>i>m_1$, 
and $y_1$ obtained as a function of $x_1,x_2$ for $i\le m_1,j>m_1$. 
One may then take $(x_1,x_2)=(\frac 19,\frac 14)$ 
(resp. ($\frac 19,\frac 19)$). 

We display below Gram matrices in the scale which make them 
integral and primitive, both with $s=n+1$ minimal vectors, 
as in the proof of Proposition~\ref{propind34}: 

{\small 
$$An11i234=\left(\stx 8600&1756&1756&1756&1756&2135&2135&2135&2135&2135&2135\\1756&1440&160&160&160&412&412&412&412&412&412\\1756&160&1440&160&160&412&412&412&412&412&412\\1756&160&160&1440&160&412&412&412&412&412&412\\1756&160&160&160&1440&412&412&412&412&412&412\\2135&412&412&412&412&1440&360&360&360&360&360\\2135&412&412&412&412&360&1440&360&360&360&360\\2135&412&412&412&412&360&360&1440&360&360&360\\2135&412&412&412&412&360&360&360&1440&360&360\\2135&412&412&412&412&360&360&360&360&1440&360\\2135&412&412&412&412&360&360&360&360&360&1440\estx\right)\,;$$} 

{\small 
$$An15i34=\left(\stx1836&816&816&816&816&816&816&816&816&819&819&819&819&819&819\\816&1728&192&192&192&192&192&192&192&364&364&364&364&364&364\\816&192&1728&192&192&192&192&192&192&364&364&364&364&364&364\\816&192&192&1728&192&192&192&192&192&364&364&364&364&364&364\\816&192&192&192&1728&192&192&192&192&364&364&364&364&364&364\\816&192&192&192&192&1728&192&192&192&364&364&364&364&364&364\\816&192&192&192&192&192&1728&192&192&364&364&364&364&364&364\\816&192&192&192&192&192&192&1728&192&364&364&364&364&364&364\\816&192&192&192&192&192&192&192&1728&364&364&364&364&364&364\\819&364&364&364&364&364&364&364&364&1728&144&144&144&144&144\\819&364&364&364&364&364&364&364&364&144&1728&144&144&144&144\\819&364&364&364&364&364&364&364&364&144&144&1728&144&144&144\\819&364&364&364&364&364&364&364&364&144&144&144&1728&144&144\\819&364&364&364&364&364&364&364&364&144&144&144&144&1728&144\\819&364&364&364&364&364&364&364&364&144&144&144&144&144&1728\estx\right)\,.$$}
\end{proof}

%%Section 5 
\section{Dimension 8} \label{secdim8} 

The list of structures with maximal index $\imath\le 4$ 
can be extracted from Theorem~\ref{thi4}. 
This list consists of the lattices listed in Theorem~\ref{thn7}, 
together with the two systems $\{2^2\}$ and $\{2,4,2^2\}$. 

\smallskip 

For larger indices, the possible co-existence of $2^2$ and $5$ 
causes difficulties, as in the case of $2^2$ and $3$. 
For this reason, the existence of the structure $\{1,2,3,2^2,5\}$ 
remains open, whereas all other cases have been settled.

%%Subsection 5.1 
\subsection{Maximal index 5} \label{subsecn8i5} 
For maximal index $5$, independently of the dimension, 
there are restrictions, as in Proposition~\ref{prop4imp}, 
obtained with the same kind of proof: 
a system which contains $\{2^2,5\}$ must contain $\{2,2^2,5\}$. 
There are also lower bounds better that $n\ge 8$ 
for some special systems, as in Proposition~\ref{prop134}, 
related to the fact that one of the invariants $m_1,m_2$ 
must be equal to $2$ or~$3$ if $n\le 9$, which implies that systems 
$\{1,5\}$ and $\{1,4,5\}$ do not exist if $n\le 10$, 
and more precisely, that if $n\le 10$, a system which strictly 
contains $\{5\}$ must contain $\{2,5\}$ or $\{3,5\}$. 
And we also know by \cite{M-S} that if $\cI\supset_{\ne}\{5\}$ 
and $1\notin\cI$, then $n\ge 10$; a $10$-dimensional example, 
with index system $\{2,3,4,5\}$, is given in \cite{M-S}. 

\smallskip 

In the general notation of \cite{M1} for index~$5$, 
the cosets of $\Lb/\Lb'$ 
are those of $\Lb'$, $\pm e+\Lb'$ and $\pm e'+\Lb'$, where 

\smallskip 
\ctl{ 
$e=\frac{e_1+\dots+e_{m_1}+2(e_{m_1+1}+\dots+e_m)}5\ \nd\ 
e'=\frac{2(e_1+\dots+e_{m_1})-(e_{m_1+1}+\dots+e_m)}5\equiv 2e\,,$} 

\smallskip\noi 
with $8\le m\le n$ and $m_1\ge m_2$. Here $n=m=8$, 
and $(m_1,m_2)$ must be equal to $(4,4)$, $(5,3)$ or $(6,4)$. 
The smallest minimal class attached to a pair $(m_1,m_2)$ 
is invariant under the action of $S_{m_1}\times S_{m_2}$ 
and can be constructed using three parameters $x_1$, $x_2$, $y_1$, 
namely the scalar products $e_i\cdot e_j$ for $i<j\le m_1$, 
for $m_1<i<j$ and for $i\le m_1,j>m_1$, respectively. 
The corresponding sets of minimal vectors 
(which have $s=16$, $s=8$, $s=16$) 
together with possible choices for the parameters 
(e.g., $(\frac 14,\frac 14,0)$, $(\frac 14,\frac 18,\frac 1{16})$, 
$(\frac 3{10},\frac 18,\frac 18)$) 
are given in \cite{M1}, and we easily deduce from these data 
that the index systems are $\{1,2,3,5\}$, $\{5\}$, 
and $\{1,2,3,4,5\}$, respectively. 

%%Theorem5.1 
\begin{theorem} \label{thi5} 
The index system of an $8$-dimensional lattice of maximal index~$5$ 
is one of $\cI_1=\{1,2,3,4,2^2,5\}$, $\cI_2=\{1,2,3,4,5\}$, 
$\cI_3=\{1,2,3,5\}$, $\cI_4=\{1,2,4,5\}$, $\cI_5=\{1,2,5\}$, 
$\cI_6=\{5\}$, 
and maybe $\{1,2,3,2^2,5\}$. 
\end{theorem}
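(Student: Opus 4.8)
The plan is to combine the reduction to a cyclic quotient with the classification of index-$5$ codes in dimension~$8$ recalled above. Since $\imath(\Lb)=5$ is prime, $\Lb$ has a sublattice $\Lb'$ with a basis of minimal vectors and $\Lb/\Lb'\cong\Z/5\Z$, so $5\in\cI(\Lb)$ and $\Lb$ falls under the description given just before the theorem, with $(m_1,m_2)$ equal to $(4,4)$, $(5,3)$ or $(6,2)$ and support $m=n=8$. Moreover every element of $\cI(\Lb)$ has order at most $\imath(\Lb)=5$, and the only isomorphism classes of abelian groups of order $\le 5$ are those written $1,2,3,4,2^2,5$; this already gives the a priori bound $\{5\}\subseteq\cI(\Lb)\subseteq\{1,2,3,4,2^2,5\}=\cI_1$. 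I would also use the index-$5$ analogue of Proposition~\ref{prop4imp} recalled above, namely that a system containing $\{2^2,5\}$ must contain $\{2,2^2,5\}$. The three values of $(m_1,m_2)$ are then treated in turn, each time starting from the smallest minimal class attached to the code, whose index system was computed just above.

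For $(m_1,m_2)=(6,2)$ the coefficients of $e$ are six $1$'s and two $2$'s and add up to $10=2d$, so Watson's identity forces every $e-e_i$ to be minimal and hence $\{1,2,3,4,5\}\subseteq\cI(\Lb)$; combined with $\cI(\Lb)\subseteq\cI_1$ this leaves only $\cI_2=\{1,2,3,4,5\}$ and $\cI_1$. For $(m_1,m_2)=(4,4)$ the smallest class already has index system $\{1,2,3,5\}$, so $\{1,2,3,5\}\subseteq\cI(\Lb)\subseteq\cI_1$, and the sets lying strictly in between are exactly $\cI_3=\{1,2,3,5\}$, $\cI_2$, $\{1,2,3,2^2,5\}$ and $\cI_1$, all of which appear in the statement -- the third being precisely the one whose realizability is left open, since the list of minimal classes realizing the $(4,4)$-code is too large to run through and the interaction between the structures $2^2$ and $5$ prevents a soft argument from deciding it.

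For $(m_1,m_2)=(5,3)$ the smallest class has index system $\{5\}$, so either $\cI(\Lb)=\cI_6=\{5\}$, or $\cI(\Lb)\supsetneq\{5\}$. In the second case I would invoke the lower bounds recalled just before the theorem: for $n\le 10$ a proper superset of $\{5\}$ must contain $1$ (by \cite{M-S}) and must contain $\{2,5\}$ or $\{3,5\}$, while $\{1,5\}$ and $\{1,4,5\}$ do not exist for $n\le 10$; together with the $2^2$-exclusion this rules out all but four subsets of $\cI_1$, namely the hybrid candidates $\{1,3,5\}$, $\{1,3,4,5\}$, $\{1,2,2^2,5\}$ and $\{1,2,4,2^2,5\}$. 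To dispose of these and to confirm that exactly $\cI_6$, $\cI_5=\{1,2,5\}$, $\cI_4=\{1,2,4,5\}$, $\cI_3$, $\cI_2$, $\cI_1$ occur, one runs through the finitely many minimal classes realizing the $(5,3)$-code -- those squeezed between the smallest class and the perfect classes of dimension~$8$ lying above it -- and reads off their index systems, a \emph{PARI-GP} computation of the same nature as the dimension-$7$ ones in Section~\ref{secind4}.

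The step I expect to be the main obstacle is precisely this coexistence of the structures $2^2$ and $5$, as already announced in the introduction. It is what leaves $\{1,2,3,2^2,5\}$ undecided in the $(4,4)$ case, and in the $(5,3)$ case the delicate point is the elimination of the hybrids $\{1,2,2^2,5\}$ and $\{1,2,4,2^2,5\}$: no Watson-type identity controls the quotients of order $4$ arising here, so one must appeal to the detailed geometry of the minimal classes realizing the $(5,3)$-code rather than to a counting argument.
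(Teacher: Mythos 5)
Your reduction to the three codes $(4,4)$, $(5,3)$, $(6,2)$, your use of Watson's identity for $(6,2)$, and your use of the index system $\{1,2,3,5\}$ of the smallest $(4,4)$ class all match the paper, and you correctly isolate $\{1,2,3,2^2,5\}$ as the undecided system. The genuine gap is in the $(5,3)$ case, which is where the content of the theorem lies. The facts you quote from the preamble only force a strict superset of $\{5\}$ to contain $1$ and to contain $2$ \emph{or} $3$, so you are left with the hybrids $\{1,3,5\}$, $\{1,3,4,5\}$, $\{1,2,2^2,5\}$, $\{1,2,4,2^2,5\}$, and you dispose of them (and simultaneously establish the existence of $\cI_4$ and $\cI_5$) by proposing to enumerate ``the finitely many minimal classes realizing the $(5,3)$-code'' and read off their index systems. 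That step is not substantiated and is not of the same nature as the dimension-$7$ computations: the smallest $(5,3)$ class has only $s=8$, hence perfection co-rank close to $28$, and the paper deliberately avoids such enumerations even when the co-rank is $4$ (above $\cC_{8a}$) or $13$ (above $\cC_{8f}$, where Lemma~\ref{lemi8} is introduced precisely to bypass the classification). As written, the four exclusions are asserted rather than proved.

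The paper closes exactly this gap with two hand arguments that your proposal lacks. First, step (2) of its proof shows that any index system strictly containing $\{5\}$ must contain $\{1,2,5\}$: one analyzes the possible numerator coefficients of a minimal vector in $e+\Lb'$ or $e'+\Lb'$ (values $1$ or $-4$, resp.\ $2$ or $-3$, etc.), rules out the all-$(-3)$ configuration because it would produce an index $3$ in dimension $4$, and uses Watson's identity for denominator $3$ to reduce the remaining case to the coset of $e'$; this kills $\{1,3,5\}$ and $\{1,3,4,5\}$ outright, something your ``$2$ or $3$'' bound cannot do. Second, Lemma~\ref{lemi5} shows that if $3\notin\cI(\Lb)$ then $S(\Lb)$ is contained in the explicit set $T=\{\pm e_1,\dots,\pm e_8,\pm e,\pm e',\pm(e-e_i)\}$ for a single $i\le 5$, whose index system is $\{1,2,4,5\}$; this eliminates $\{1,2,2^2,5\}$ and $\{1,2,4,2^2,5\}$, i.e.\ precisely the coexistence of $2^2$ and $5$ that you flagged as the obstacle but did not resolve. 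Finally, the existence of $\cI_5=\{1,2,5\}$, $\cI_4=\{1,2,4,5\}$ and $\cI_1$ on the $(5,3)$ code is obtained in the paper by explicit parameter choices (including a five-parameter construction forcing $e-e_5\in S$ for $\cI_4$), not by the enumeration you invoke, so these constructions would also have to be supplied.
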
 

\begin{proof} 
The proof will involve three steps: (1) the construction of more index 
systems; (2) the proof that an index system which strictly contains 
$\{5\}$ indeed contains $\{1,2,5\}$; (3) the proof that an index system 
$\cI$ with $3\notin\cI$ must be equal to $\{1,2,5\}$ 
or to $\{1,2,4,5\}$. 

\smallskip 

(1) Choose $(m_1,m_2)=(5,3)$. 
Taking $(x_1,x_2,y_1)=(\frac14,\frac 1{12},\frac 1{60})$, 
we obtain a lattice with $S=\{\pm e_i,\pm e\}$, hence $\cI=\{1,2,5\}$; 
taking $(x_1,x_2,y_1)=(\frac14,-\frac 1{12},\frac 1{12})$, 
we obtain a lattice with $s=30$ and $\cI=\{1,2,3,4,2^2,5\}$. 
%\newline 
{\small 
[This last index system (but no smaller system) occurs 
for numerous perfect lattices.]} 

There remains to construct a lattice with $\cI=\{1,2,4,5\}$. 
To this end we now use $5$~parameters, restricting $x_1$ to 
$i,j\le 4$ and $y_1$ to $i\le 4$, introducing $z_1=e_i\cdot e_5$ 
($i\le 4$), $z_2=e_5\cdot e_j$ ($j\ge 6$), then setting 
$z_2=\frac 14 x_1+\frac 12 x_2+y_1-\frac 23 z_1+\frac 7{48}$ 
to ensure $e-e_5\in S$. Taking 
$(x_1,z_1,x_2,y_1)= (\frac 15,\frac 14,\frac 18,\frac 1{16})$, 
we obtain a lattice with $S=\{\pm e_i,\pm(e-e_5)\}$, 
hence $\cI=\{1,2,4,5\}$. 

\smallskip 

(2) Let $\Lb$ be a lattice with $\cI\supset_{\ne}\{5\}$. 
There is nothing to prove if $(m_1,m_2)=(4,4)$ or $(6,2)$, 
and we may assume that $S\supset_{\ne} S(\Lb')$, hence that there exists 
a minimal vector $x\in e+\Lb'$ or $x'\in e'+\Lb'$. 
Then the coefficients $a_i$ (resp. $a'_i$) in the numerator 
of $x$ (resp. of $x'$) are $1$ or $-4$ if $i\le 5$ and $2$ or $-3$ 
if $i\ge 6$ (resp. $2$ or $-3$ if $i\le 5$ and $-1$ or $+4$ 
if $i\ge 6$). The coefficients $a'_i$, $i\le 5$ 
of an $x'\in S(e'+\Lb')$ cannot all be equal to $-3$, since an index 
$3$ would then exist in dimension~$4$. Thus $a'_i=2$ for some~$i$, 
which proves (2) if $S(e'+\Lb')\ne\emptyset$. 

If there exists $x\in S(e+\Lb')$ with all $a_1=-3$, 
Watson's identity with denominator~$3$ shows that $\Lb$ contains 
a vector $y=\frac{x+b_1 e_1+\dots+b)5 e_5}3$ with $b_i=\pm 1$ 
($b_i\equiv a_i\text{\,($=1$\,or\,$-4$)}\!\!\mod 3$. Then $y-b_1 e_1$ 
is a minimal vector in $e'+\Lb'$, a contradiction. 

\smallskip 

(3) The proof will be a consequence of the following lemma: 

%%Lemma 5.2 
\begin{lemma} \label{lemi5} 
If $3\notin\cI(\Lb)$, 
then $S(\Lb)\subset T=\{\pm e_1,\dots,\pm e_8\pm e,\pm e',\pm(e-e_i)\}$ 
for {\em one} index $i\le 5$. 
\end{lemma} 

Indeed, it is readily verified that $\cI(T)=\{1,2,4,5\}$, 
which excludes the structures $\{1,2,2^2,5\}$ and $\{1,2,2^2,4,5\}$. 

\smallskip 

This completes the proof of Theorem~\ref{thi5}. 
\end{proof} 

\noi{\em Proof of Lemma~\ref{lemi5}.} 
We successively consider the cosets of $0$, $e$ and $e'$ in $\Lb$ 
modulo $\Lb'$, using the notation $e,e',a_i,a'_i$ introduced 
in the proof of (2) above. 

(1) Because of the bound $\imath(\Lb)\le 5$, the components 
of the minimal vectors of $\Lb'$ on the $e_i$ must be $0$ or $\pm 1$. 
We must discard vectors of the form $e_1+e_2$ or $e_6+e_7$, 
since we could the write $e$ using $7$~independent vectors 
in its numerator; and a base change will show that using a vector 
of the form $e_1-e_2$, $e_6-e_7$ or $e_1\pm e_6$, we may 
define $\Lb$ with $(m_1,m_2)=(4,4)$ or $(6,2)$. Using this remark, 
we easily see that if there were in $S(\Lb')$ a sum $e_i\pm e_j\pm e_k$ 
with more than two components, then we could again express $e$ 
using less than $8$~vectors in its numerator. 
 
\smallskip 

(2) the minimal vectors $x\in e+\Lb'$ must have $a_i=1$ or $-4$ 
if $i\le 5$ and $a_i=2$ if $i>5$, and not three or more $a_i$ 
 equal to $-4$, since otherwise we would have an index $4$ 
in a dimension smaller than~$7$. 

If, say, $a_1=a_2=-4$, we have 

\smallskip\ctl{ 
$e=x+e_1+e_2=\frac{(-x+e_3+e_4+e_5)/2+e_6+e_7+e_8}2$\,,} 

\smallskip\noi 
which shows that $\frac{\pm x\pm e_3\pm e_4\pm e_5}2$ 
are minimal. Setting $y=\frac{x+e_3+e_4+e_5}2$, 
we then have 

\smallskip\ctl{ 
$y=\frac{e_6+e_7+e_8-2(e_1+e_2)+3(e_3+e_4+e_5}5$\,,} 

\smallskip\noi 
and an index~$3$ shows up. 

Finally, if, say, $e-e_1$ and $e-e_2$ are minimal, 
the identity 

\smallskip\ctl{ 
$e=\frac{-(e-e_1)-(e-e_2)+e_3+e_4+e_5+2(e_6+e_7+e_8)}5$} 

\smallskip\noi 
with $8$ independent vectors in the numerator shows the existence 
of an index~$3$. This proves that $S(e+\Lb')$ must be a subset 
of $\{\pm e,\pm(e-e_i)\}$ for {\em one} $i\in\{1,2,3,4,5\}$. 
 
\smallskip 

(3) We must have $a'_i=2$ for $i\le 5$, and if some $a'_i$ 
were equal to~$4$ for $i\ge 6$, then we would an index~$2$ 
in a dimension least than~$4$. This proves that $S(e'+\Lb')$ 
must be a subset of $\{\pm e'\}$. 
\qed 

Here is a Gram matrix ($n=8$, $\imath=5$, $s=9$) 
with $S=\{\pm e_i, \pm(e-e_5)\}$: 

{\small$$\left(\stx 
1404&534&534&534&702&697&697&697\\534&1200&240&240&300&75&75&75\\ 
534&240&1200&240&300&75&75&75\\534&240&240&1200&300&75&75&75\\ 
702&300&300&300&1200&185&185&185\\697&75&75&75&185&1200&150&150\\ 
697&75&75&75&185&150&1200&150\\697&75&75&75&185&150&150&1200 
\estx\right)\,.$$} 

%%Subsection 5.2 
\subsection{Maximal index 6} \label{subsecn8i6} 
Listing the various combinations of maximal 
\linebreak 
index~$6$, with or without $2^2$ and\,/\,or $5$, looks very 
complicated beyond~$n=8$, though the codes are known 
in all dimensions (\cite{K-M-S}, Section~6). Thus we restrict ourselves 
to dimension~$n=8$. 

%%Theorem5.3 
\begin{theorem} \label{thi6} 
The index system of an $8$-dimensional lattice of maximal index~$6$ 
is one of the three systems

\ctl{$\cI_1=\{1,2,3,4,2^2,5,6\}$, $\cI_2=\{1,2,3,4,2^2,6\}$ 
or $\cI_3=\{2,4,2^2,6\}$\,.} 
\end{theorem}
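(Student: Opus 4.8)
\textbf{Proof proposal for Theorem~\ref{thi6}.}
The plan is to mimic the structure of the proof of Theorem~\ref{thi5}: first establish which index systems of maximal index~$6$ can possibly occur by combining the general restrictions already proved (Corollary~\ref{cor2elem}, Watson's identity, and the lower-dimension obstructions), then realize the three claimed systems by explicit constructions via the averaging argument, and finally rule out every remaining candidate. Since $\imath(\Lb)=6$ with $6=2\cdot 3$, I would begin from the data of \cite{M1} (and \cite{K-M-S}, Section~6) describing the codes giving a cyclic quotient of order~$6$, together with the invariants $m_1,m_2$ (here the support has $m=n=8$, so only finitely many $(m_1,m_2)$ arise). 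The presence of index~$6$ already forces, via Watson's identity applied to the denominators $2$ and~$3$ extracted from a minimal vector in the relevant coset, that $\{2,3\}\subset\cI(\Lb)$; combined with the fact that $3\in\cI$ produces a $\D$-type section one gets $4\in\cI$ as well, and Corollary~\ref{cor2elem} shows that if $2^2\in\cI$ then $2\in\cI$ (automatic here). This should already cut the list of a~priori possible systems down to a short one containing $\cI_1,\cI_2,\cI_3$ and a handful of others differing by the presence or absence of $1$, $5$, $2^2$.

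Next I would carry out the realizations. For each admissible pair $(m_1,m_2)$ the smallest minimal class is invariant under $S_{m_1}\times S_{m_2}$ and is cut out by the three parameters $x_1=e_i\cdot e_j$ ($i<j\le m_1$), $x_2=e_i\cdot e_j$ ($m_1<i<j$), $y_1=e_i\cdot e_j$ ($i\le m_1<j$), exactly as in Subsection~\ref{subsecn8i5}; enlarging the parameter set (as was done there to get $\{1,2,4,5\}$) lets one force particular vectors such as $e-e_i$ to be minimal. I would pick the appropriate $(m_1,m_2)$ and three or four explicit rational values to obtain in turn a lattice with $S=\{\pm e_i,\pm e\}\cup\{\text{nothing more from the }e'\text{-coset}\}$, yielding $\cI_3=\{2,4,2^2,6\}$ (the analogue of the ``$1\notin\cI$'' case, so one must check $e$ itself is \emph{not} minimal); a second choice giving $\cI_2=\{1,2,3,4,2^2,6\}$; and a third giving $\cI_1=\{1,2,3,4,2^2,5,6\}$. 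As in the earlier proofs, each construction is finished by exhibiting a Gram matrix in the integral primitive scale and invoking a PARI-GP verification of the set of minimal vectors and the resulting quotients.

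The bulk of the work — and the main obstacle — is the elimination step: showing that no index system strictly between these three (or incomparable to them) can occur. Concretely one must rule out candidates such as $\{1,2,3,6\}$, $\{1,2,4,6\}$, $\{1,2,6\}$, $\{2,6\}$, $\{6\}$, $\{1,2,3,4,6\}$, $\{2,3,4,2^2,6\}$, and any system containing $5$ but not $2^2$ or vice versa. The key tool is the base-change argument used repeatedly above: given a minimal vector $x$ in a nontrivial coset, its numerator coefficients are bounded by Corollary~\ref{cormaxind}, and rewriting $e$ (or $e'$) using fewer than $8$ independent vectors in the numerator — or producing a section of dimension $\le 6$ carrying an index $3$, $4$, or $2^2$ — forces the corresponding structure into $\cI(\Lb)$. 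I expect the delicate point, parallel to the open case $\{1,2,3,2^2,5\}$ left unresolved for $\imath=5$, to be controlling the simultaneous behaviour of the indices $5$ and $2^2$ when both $3$ and $6$ are present; here one must show that once index~$6$ forces $\{2,3,4\}\subset\cI$, the only freedom left is whether $5$ and/or $2^2$ appear, and that $2^2$ appears whenever the class is not the minimal ``$\cI_3$'' one — which is precisely what the three-parameter analysis of the smallest classes delivers, so no genuinely new phenomenon arises and the list closes at three systems.
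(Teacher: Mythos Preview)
Your plan has two concrete errors that would derail the argument.

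First, for a cyclic quotient of order~$6$ the numerator coefficients $a_i$ range over $\{1,2,3\}$, so the combinatorial data is a \emph{triple} $(m_1,m_2,m_3)$, not a pair $(m_1,m_2)$; the paper's Table~11.1 lists six such triples (one only for $\E_8$), and the smallest minimal classes attached to the remaining five already realize $\cI_1,\cI_2,\cI_3$. Your three-parameter setup $x_1,x_2,y_1$ is modeled on the index-$5$ case and does not match the index-$6$ geometry.

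Second, and more seriously, your claim that Watson's identity forces $\{2,3\}\subset\cI(\Lb)$ is false: the system $\cI_3=\{2,4,2^2,6\}$ itself does \emph{not} contain~$3$. For the triple $(m_1,m_2,m_3)=(3,3,2)$ the coefficient sum is $3+6+6=15\ne 2d$, so Watson gives nothing directly for $e$; what actually happens is that $e'=\frac{e_1+e_2+e_3-e_4-e_5-e_6}3$ lies in $2e+\Lb'$ and Watson applied to $e'$ produces the six minimal vectors $e'-e_i$, $e'+e_j$, yielding the indices $2,4,2^2$ but not~$3$. Because of this, the entire shape of the elimination step is different from what you sketch: the paper observes that four of the five triples already give the large systems $\cI_1$ or $\cI_2$, so any exotic system could only arise above the $(3,3,2)$ class, and then carries out a coset-by-coset analysis (cosets of $e$, $e'$, $e''=\frac{e_1+e_2+e_3+e_7+e_8}2$) showing that any minimal vector outside $\Lb'\cup(e'+\Lb')$ forces first $1\in\cI$ (via \cite{M3}) and then $3\in\cI$, hence $\cI\supset\cI_2$. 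Your proposed base-change heuristics do not capture this reduction, and your list of ``candidates to rule out'' is built on the wrong premise that $3$ is automatic.
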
 

\begin{proof} 
In \cite{M1}, table~11.1, six types of maximal index~$6$ 
are listed, among which we must discard the third one, 
which only exists for the class of~$\E_8$. 
Using the data of this table, we can determine the index system 
of the smallest minimal class in each case. 
Here are the results for each remaining five sets $(m_1,m_2,m_3)$: 
$(4,3,1),(3,4,1)$\,: $\cI_1$\,; $(2,4,2),(4,2,2)$\,: $\cI_2$\,; 
$(3,3,2)$: $\cI_3$. 

This shows first that the three structures listed above exist, 
and next that a further structure, if any, 
must strictly contain $\cI_3$ and must be realized using 
$(m_1,m_2,m_3)=(3,3,2)$. 
To deal with this case, we introduce the notation 

\smallskip\ctl{\small 
$e=\frac{e_1+e_2+e_3+2(e_4+e_5+e_6)+3(e_7+e_8)}6$, 
$e'=\frac{e_1+e_2+e_3-e_4-e_5-e_6}3$, 
$e''=\frac{e_1+e_2+e_3+e_7+e_8}2$\,.} 

\smallskip\noi 
By Watson's identity, the $6$ vectors $e'-e_i$, $e'+e_j$, 
$i=1,2,3$, $j=4,5,6$ are minimal. For a sublattice $L$ of $E$ 
with $S(L)\subset\Lb'\cup(e'+\Lb')$, we have $[\Lb:L]=2$, 
hence $\cI(L)\subset\cI_3$. Hence a lattice $L$ 
with $\cI(L)\supset_{\ne}\cI_3$ must have a minimal vector $x$ 
off the cosets of $0$ and $e'$, and moreover $\cI(L)$ must contain 
and odd number. Then its minimal vectors generate $L$, 
so that by \cite{M3}, we have $1\in\cI(L)$. 
To prove the theorem, it suffices to show that $\cI(L)$ then also 
contains~$3$. This we now prove. 

\smallskip 

If $\pm x\in e+\Lb'$, let $x=\frac{a_1 e_1+\dots+a_8 e_8}6$.
For $i=7$ or~$8$, we have $\pm a_i\equiv 3\mod 6$, hence $a_i=\pm 3$, 
and the existence of an index~$3$ is clear. 

\smallskip 

Let now $x=\frac{a_1 e_1+\dots+a_8 e_8}2\in e''+\Lb'$. 
The $a_i$ are odd for $i=1,2,3,7,8$ and even for $i=4,5,6$. 
We first show that $a_i=\pm 1$ for $i=1,2,3$. 
We have $e_1=2x-a_2 e_2-\dots-a_8 e_8$, so that $e$ may be written 
on the independent vectors $x,e_2,\dots,e_8$ 
in the form $e=\pm\dfrac{2x+b_2 e_2+\dots+b_8 e_8}{6\abs{a_1}}$. 
Since the $\gcd$ of the coefficients of the numerator is $1$ or~$2$, 
$3\abs{a_1}$ is an index for $L$, which implies $3\abs{a_1}\le 6$, 
hence $a_1=\pm 1$, and similarly $a_2,a_3=\pm 1$. 

Permuting $e_1,e_2,e_3$ and negating $x$ if need be, we may assume 
that $a_1=a_2=+1$ and write 
$\pm e=\pm\dfrac{2x+b_3 e_3+\dots+b_8 e_8}6$ 
with $b_3=0$ or $-1$ as a combination of seven minimal vectors 
with denominator~$6$. Since index $6$ is not possible in 
dimension~$7$, all $b_i$ must be even, and in particular, 
$b_3$ must be zero. Now $x$ is a combination of six minimal vectors 
with denominator~$3$ and coprime coefficients in the numerator. 
Watson's identity for denominator~$3$ shows that $e+\Lb'$ contains
minimal vectors, and we are back to the first case. 
\end{proof}

%%Subsection 5.3 
\subsection{Maximal index 8} \label{subsecn8i8} 
We know from \cite{M1} that we have $\imath\le 8$ 
except on the class of $\E_8$ (see Section\ref{secintro}), 
where there exists elementary quotients $\Lb/\Lb'$ 
of order $9$ and $16$, 
and that cyclic quotients of order $7$ or~$8$ do not exist 
in dimension~$8$. 
{} 
Six codes for index~$8$ are listed in Table 11.1 of \cite{M1}, $n=8$. 
We denote the corresponding smallest minimal classes by $\cC_{8a}$ 
to $\cC_{8f}$, and by $\cC{8g}$ that of $\E_7\oplus\A_1$, 
which extends $\cl(\E_7)$ to $n=8$; the quotient $\Lb/\Lb'$ 
is of type $(4\cdot 2)$ in the first three cases, and $2$-elementary 
in the remaining four cases. 
The class $\cC_{8f}$ (with $(s,r)=(32,23)$) is that of the lattice 
$L_{32}$ which lifts the unique binary code having weight system 
$(4^3\cdot 5^4)$. The class $\cC_{8b}$ is a Voronoi path 
\hbox{$\E_8$\,---\,$\E_8$} (with $(s,r)=(75,35)$) 
discovered by Watson, along which lattices have an $\E_7$-section 
(and also a $\D_7$-section). 
The first three codes 
define quotients of type $4\cdot 2$, the remaining four elementary 
quotients. Averaging on codes for classes $\cC_{8a}$ and $\cC_{8e}$ 
yields isometric lattices, with $(s,r)=(48,32)$, 
hence $\cC_{8a}=\cC_{8e}$. 

\smallskip 

We display below Gram matrices $M_{32}$ for $L_{32}$ and $W_{75}$ 
for the eutactic lattice $\Lb_{75}$ lying on the Watson path; 
the basis for $L_{32}$ is $(e_1,e_2,e_3,e,e_5,f,e_7,g)$ where 
{\small 
$$e=\frac{e_1+e_2+e_3+e_4}2,\ f=\frac{e_3+e_4+e_5+e_6}2\ \nd\ 
g=\frac{e_2+e_4+e_6+e_7+e_8}2\,,$$} 
and $(e_1,\dots,e_8)$ is an orthogonal basis for $\Lb'$: 
{\small 
$$M32=\left(\stx 4&0&0&2&0&0&0&0\\0&4&0&2&0&0&0&2\\0&0&4&2&0&2&0&0\\2&2&2&4&0&2&0&2\\0&0&0&0&4&2&0&0\\0&0&2&2&2&4&0&2\\0&0&0&0&0&0&4&2\\0&2&0&2&0&2&2&5\estx\right)\,;\quad 
W75=\left(\stx 4&2&2&2&2&2&2&1\\2&4&0&0&0&2&0&2\\2&0&4&2&2&0&0&0\\2&0&2&4&2&0&0&0\\2&0&2&2&4&0&0&0\\2&2&0&0&0&4&0&0\\2&0&0&0&0&0&4&0\\1&2&0&0&0&0&0&4\estx\right)\,.$$} 

We shall prove the following result: 

%%Theorem 5.4 
\begin{theorem} \label{thi8} 
The index system of an $8$-dimensional lattice $\Lb$ 
with $\imath(\Lb)>6$ is one of the following five systems: 

\ctl{$\cI_1=\{1,2,3,4,2^2,5,6,4\cdot 2,2^3,3^2,2^4\}$, 
$\cI_2=\{1,2,3,4,2^2,5,6,4\cdot 2,2^3\}$\,,} 

\ctl{$\cI_3=\{1,2,3,4,2^2,4.2,2^3\}$, $\cI_4=\{1,2,3,4,2^2,2^3\}$\,,} 

\ctl{$\cI_5=\{1,2,2^2,2^3\}$, $\cI_6=\{2,4,2^2,2^3\}$\,.} 

\smallskip\noi 
All these systems exist, $\cI_1$, $\cI_5$, $\cI_6$ 
on unique minimal classes, that of $\E_8$, $\D_8$ and $L_{32}$, 
respectively, and $\cI_2$, $\cI_3$, $\cI_4$, on several classes. 
The system $\cI_2$ is that of the Watson path, $\cI_3$ that of 
$\cC_{8a}=\cC_{8e}e$, and $\cI_5$ that of $\E_7\perp\A_1$ and also 
of one well-defined class $\cC'_{8f}\succ\cC_{8f}$ 
with $(s,r)=(33,24)$. 
\end{theorem}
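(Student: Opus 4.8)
The plan is to treat the six index systems one at a time, using the six codes for index $8$ from Table~11.1 of \cite{M1} together with the extra class $\cC_{8g}$ of $\E_7\perp\A_1$, and to bound from below which enlargements of each smallest minimal class are possible. First I would handle the ``known'' endpoints: for $\E_8$ we invoke Proposition~2.something (the root-lattice proposition) to get $\cI(\E_8)=\cI_1$; for $\D_8$ the same proposition gives $\cI(\D_8)=\{1,2,\dots,2^{\lf 7/2\rf}\}=\{1,2,2^2,2^3\}=\cI_5$; and for $\E_7\perp\A_1$ we use $\cI(\cC_{ext})=\cI(\cC)$ from Subsection~\ref{subsecgenres} together with $\cI(\E_7)$, which gives $\cI_5$ again. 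This already pins down $\cI_1,\cI_5,\cI_6$ as the index systems of $\E_8$, $\D_8$, $L_{32}$ respectively once $L_{32}$ is computed directly (its $s=32$ is small, so the naive algorithm of Subsection~\ref{subseccalcul} suffices, or one can argue via the weight-$(4^3\cdot5^4)$ binary code using the remark in Subsection~\ref{subsecgenres} that binary codes force pairwise-orthogonal $e_i$).

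Next I would compute, for each of the smallest classes $\cC_{8a}=\cC_{8e}$, $\cC_{8b}$ (the Watson path), $\cC_{8c}$, $\cC_{8d}$, $\cC_{8f}$, the index system by an averaging computation, exactly as in the proofs of Theorems~\ref{thi5} and~\ref{thi6}: write $\Lb=\la\Lb',e\ra$ (or $\la\Lb',e,f\ra$ in the $4\cdot2$ and $2$-elementary cases), reduce the scalar products to the few invariant parameters $x_i,y_{i,j}$ permitted by the symmetry group, and read off which residues $e-\sum\epsilon_i e_i$ become minimal; Watson's identity from Subsection~\ref{subsecspec} supplies the odd indices. For the Watson path this gives $\cI_2$ (and one must check that no point of the path yields a strictly larger system, which follows since the only perfect lattice above it with an extra index is $\E_8$, whose index system is $\cI_1$, and the path never reaches $\E_8$ except at its endpoints — here one uses the classification of perfect $8$-lattices recalled in Subsection~\ref{subsecspec}). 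For $\cC_{8a}$ one gets $\cI_3$, and for the remaining classes $\cI_4$; the exhibited Gram matrices $M_{32}$, $W_{75}$ are there to certify these sets of minimal vectors.

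The genuinely delicate point is exclusion: I must show that \emph{no other} index system occurs, i.e. every index-$8$ lattice lies above one of these seven smallest classes and its index system does not jump past the five listed sets. The mechanism is the one used repeatedly above: given $\Lb$ with $\imath=8$ and an index system strictly containing the smallest one, pick $x\in S(\Lb)$ off the obvious cosets, apply Corollary~\ref{cormaxind} to bound the coefficients of $x$, and then use Watson's identity for the divisors $2,3$ to force an index $3$ (or a $2^2$, or an $8$-elementary quotient of order $9$ or $16$) that would contradict being in dimension~$8$ — the latter because such quotients occur only for $\E_8$ by \cite{M1}. Corollary~\ref{cor2elem} rules out the would-be systems containing $2^2$ and $2^3$ but not $2^2$'s predecessors in the same way it ruled out $\{3,4,2^2\}$. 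One also needs the specific claims that $\cC_{8a}=\cC_{8e}$ (averaging the two codes gives isometric lattices with $(s,r)=(48,32)$, a direct check) and that $\cI_5$ is realized by a \emph{unique} further class $\cC'_{8f}\succ\cC_{8f}$ with $(s,r)=(33,24)$ — this last assertion I would verify by enumerating the minimal classes above $\cC_{8f}$ that add exactly one minimal-vector pair and checking their index systems by the averaging argument, the PARI-GP computation underlying the displayed matrices serving as the certificate. The main obstacle is bookkeeping: making sure the case analysis over the seven smallest classes and their enlargements is genuinely exhaustive, since the argument is a finite but intricate descent through Voronoi neighbours rather than a single clean inequality.
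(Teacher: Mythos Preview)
Your overall plan --- compute the index systems of the smallest classes attached to the seven codes, then rule out any further systems by chasing extra minimal vectors with Corollary~\ref{cormaxind} and Watson's identity --- matches the paper's strategy in outline. But there is a genuine gap in the exclusion step for classes above $\cC_{8f}$. You propose to ``enumerate the minimal classes above $\cC_{8f}$ that add exactly one minimal-vector pair'', but $\cC_{8f}$ has perfection corank $36-23=13$, so the family of classes above it is far too large for direct enumeration; the paper says explicitly that ``classifying all possible classes is certainly complicated''. Instead the paper proves a structural lemma (Lemma~\ref{lemi8}): any class strictly above $\cC_{8f}$ either already lies above $\cC_{8g}$, or already has $\cI\supset\cI_2$, or can be defined by minimal vectors lying in the small set $S(\cC_{8f})\cup\{\frac{\pm e_2\pm e_4\pm e_6\pm e_7\pm e_8}2\}$. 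Only after this trichotomy is the residual computer check short. Your Watson/Corollary~\ref{cormaxind} sketch is the right toolkit for \emph{proving} such a lemma, but you have not identified that this reduction is the crux, nor the specific three-way split that makes it go through.

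A second gap concerns classes above $\cC_{8g}=\cl(\E_7\perp\A_1)$. Your argument that ``the only perfect lattice above it with an extra index is $\E_8$'' does not by itself control the intermediate (non-perfect) classes. The paper invokes the classification results of \cite{D-S-V}: there is a unique class $\cC_{69}$ with $r=34$ below the Watson path, and since $s-r=35$ holds on both $\cC_{69}$ and $\cC_{8g}$, the classes in between are obtained simply by deleting vectors off $\E_7$, which can then be checked. Similarly, for classes above $\cC_{8a}$ the paper enumerates the entire Ryshkov polyhedron (feasible there because the corank is only $4$); this is not the coset-by-coset Watson argument you sketch but a finite polyhedral search. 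Finally, a small slip: $\cI(\E_7\perp\A_1)=\cI(\E_7)=\{1,2,3,4,2^2,2^3\}=\cI_4$, not $\cI_5$; the theorem statement contains a typo at this point, and the paper's proof confirms $\cI_4$ for both $\cC_{8g}$ and $\cC'_{8f}$.
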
 

\begin{proof} 
We first list the invariants $(s,r)$ and $\cI$ of the six smallest 
minimal classes related to the seven codes listed above: 

\noi$\cC_{8a}$: $(s,r)=(48,32)$, $\cI=\cI_3$ ($\cC_{8e}=\cC_{8a}$); 

\noi$\cC_{8b}$: $(s,r)=(75,35)$, $\cI=\cI_2$; 

\noi$\cC_{8c}$: $(s,r)=(120,36)$, $\cI=\cI_1$ ($\cC_{8c}=\cl(\E_8$); 

\noi$\cC_{8d}$: $(s,r)=(56,36)$, $\cI=\cI_5$ ($\cC_{8d}=\cl(\D_8$); 

\noi$\cC_{8f}$: $(s,r)=(32,23)$, $\cI=\cI_6$; 

\noi$\cC_{8g}$: $(s,r)=(64,29)$, $\cI=\cI_4$. 

\smallskip 

Here are a few comments on the list above. From \cite{M1}, 
we know $\cI_1$ and $\cI_4$, and the fact that we have $\imath\le 8$ 
except on the class of $\E_8$. This shows that every index system 
except $\cI_1$ is contained in $\cI_2$. 

We also know that $\cC_{8b}$ contains $\cI(\E_7)$ and $\{4\cdot 2\}$. 
A computer search then quickly finds cyclic quotients of order $5$ 
and~$6$ (a few days computations finds the number of occurrences 
of all quotients $\Lb/\Lb'$), and the remaining calculations are much shorter. 
This proves the existence of the six index systems 
of Theorem~\ref{thi8}. Note also that that $4$ belongs to all systems 
except $\cI_5$. We have thus also proved the uniqueness assertions 
about $\cI_1$ and~$\cI_5$. 

\smallskip 

To classify all index systems containing $4\cdot 2$, 
it now suffices to consider classes containing $\cC_{8a}$. 
The perfection co-rank of $\cC_{8a}$ is sufficiently small 
($36-32=4$) to allows us to find all classes $\cC$ lying 
above $\cC_{8a}$ (in other words, to find its {\em Ryshkov polyhedron} 
in the sense of \cite{K-M-S}, Section~3). 
One class has $(s,r)=(49,33)$ and $\cI=\cI_3$. 
the other classes all have $\cI=\cI_2$ except the maximal one 
which is that of $\E_8$. (These have invariants $(33,56)$, 
$(34,57)$, $(35,66)$, and $(35,75)$, the Watson path.) 

This proves that the index systems containing $4\cdot 2$ 
are $\cI_1$, $\cI_2$, and $\cI_3$. 

\smallskip 

We now turn classes which lie above $\cC_{8g}$ and have an index 
system which strictly contains $\cI(\cC_{8g})=\cI_4$. The maximal 
classes are those of a perfect lattice with $\imath\ge 8$. 
The only such lattice is $\E_8$. By the results of \cite{D-S-V}, 
classes with $r\le 35$ are contained in the Watson path. 
While classifying the possible values of $s$ in dimension~$8$, 
the authors of \cite{D-S-V} have proved that there exists 
a unique class with $r=34$ lying below $\cC_{75}=\cC_{8b}$, 
which has $s=69$. I have checked that this class $\cC_{69}$ 
has again index system $\cI_2$. 

Now we have $s-r=35$ on $\cC_{69}$ as on $\cC_{8g}$. This shows 
that the classes $\cC$ such that $\cC_{8g}\prec\cC\prec\cC_{69}$ 
are obtained by removing arbitrary vectors off $\E_7$ 
from $\cC_{69}$. Testing equivalence, we have shown 
that there are two such classes with $r=33$, 
and have checked that all have $\cI=\cI_4$. 

This proves that the classes containing $\cC{8g}$ have index system 
$\cI_1$, $\cI_2$ or~$\cI_4$. 

\smallskip 

Finally we are left with classes $\cC\succ\cC_{8f}$. Classifying 
all possible classes is certainly complicated, since the minimal class 
$\cC_{8f}$ depends on $36-23=13$ parameters, namely the scalar products 
$e_i\cdot e_j$ for $i=1,\dots,6$ and $j=7,8$, and $e_7\cdot e_8$. 
Thanks to Lemma~\ref{lemi8} below, we can avoid such a classification. 
The matrix $M_{32}$ is obtained taking these parameters all zero. 
Replacing $0$ by $-\frac 1{12}$ for $i=2,4,6$ and $j=7,8$, 
we obtain a lattice with $s=33$, $r=24$ and $\cI=\cI4$. 
Its minimal class is the class $\cC'_{8f}$. 

%%Lemma 5.5 
\begin{lemma} \label{lemi8} 
Let $\cC$ be a minimal class containing {\em strictly} $\cC_{8f}$. 
Then one of the following assertions holds: 
\begin{enumerate} 
\item 
$\cC\succ\cC_{8g}$. 
\item 
$\cI(\cC)\supset\cI_2$. 
\item 
$\cC\succ\cC'_{8f}$, and $\cC$ can be defined by a set of minimal 
vectors contained in 
$S(\cC_{8f})\cup\{\frac{\pm e_2\pm e_4\pm e_6\pm e_7\pm e_8}2\}$. 
\end{enumerate} 
In all cases, $\cI(\cC)$ contains $\cI_4$. 
\end{lemma} 

Taking for granted this lemma, we can now complete the proof 
of Theorem~\ref{thi8}. The last assertion of the lemma 
proves that only $\cC_{8f}$ has index system $\cI_6$. 
Next a computer calculation on the few systems of minimal vectors 
as in (3) shows that either $\cI(\cC)=\cI_4$ or $\cI(\cC)$ 
contains~$\cI_2$. 
\end{proof} 

\noi{\em Proof of Lemma~\ref{lemi8}.} Let $\cC\succ_{\ne}\cC_{8g}$, 
let $\Lb\in\cC$, and let $x\in S(\Lb)\sm S(\Lb_{32})$, 
belonging to a coset $v+\Lb'$. We consider three cases: 
\newline 
(1) $v=0$ (i.e., $x\in\Lb'$); 
\newline 
(2) $v$ lifts a word of weight~$4$ (i,e., $v=e,f$ or $e+f$); 
\newline 
(3) $v$ lifts a word of weight~$5$ (i,e., $v=g,g+e,g+f$ or $g+e+f$). 
\newline 
Taking into account the automorphisms of the code, 
we may assume that $v=e$ in case~(2) and $v=g$ in case~(3). 
In all cases, by Corollary~\ref{cormaxind}, the components 
of $x$ on the basis $(e_1,\dots,g)$ used to construct $\Lb_{32}$ 
are $0,\pm 1$. 

\smallskip\noi 
(1) Let $x=\pm e_{i_1}\pm\dots\pm e_{i_k}$ (with one or two 
terms in $\{e_7,e_8\}$, since $e_i\cdot e_j=0$ if $i<j<7$). 
If $k=2$, replacing $e_7$ or $e_8$ by $x$ amounts to change 
the code into a code of length~$7$ generated by weight-$4$ words, 
that is, the code of~$\E_7$. There cannot be three components in
the support of a weight-$5$ word, and if, say, $x=e_1+e_2+e_7$, 
then $e_1+e_7$ is minimal since $e_1\cdot e_2=0$. 
The case when $k\ge 4$ similarly reduces to $k-1$. 

\smallskip\noi 
(2) We may assume using change of signs that 
$x=e\pm e_7$ or $e\pm e_7\pm e_8$. Replacing $e_1,e_2,e_3,e_4$ 
by the four vectors $\frac{e_1\pm e_2\pm e_3\pm e_4}2$ 
having $0$ or $2$ minus signs, we are back to the previous case. 

\smallskip\noi 
(3) We have $v=g$, so that the minimal class of $\la\Lb,x\ra$ 
is either $\cC'_{8g}$, or $x$ may be assumed to be equal to 
$g+e_1$, $g+e_1+e_3$, or $g+e_1+e_3+e_5$. The last two cases reduce 
to $x=g+e_1$ (because $e_1\cdot e_3=0$), and a computer calculation 
shows that $\cI$ then contains~$\cI_2$. 

Moreover, if $\cC$ contains besides $g$ a vector $y\ne\pm g$, 
then either $y$ belongs to $g+\Lb'$, and then $y$ is a vector $g'$ 
obtained from $g$ by changing signs of some $e_i$, 
or $y$ belongs to $e+g$, say, and using the argument used deal 
with~(2), we again reduce ourselves to the previous situation. 
\qed

%\vfil\eject\hbox{} 

\end{document}